\documentclass[11pt,reqno]{amsart}
\usepackage{amsmath,amssymb,mathrsfs}
\usepackage{graphicx,cite}
\usepackage{color}
\usepackage{subfigure}
\usepackage{appendix}

\setlength{\topmargin}{-1.5cm}
\setlength{\oddsidemargin}{0.0cm}
\setlength{\evensidemargin}{0.0cm}
\setlength{\textwidth}{16.7cm}
\setlength{\textheight}{23cm}
\headheight 20pt
\headsep    26pt
\footskip 0.4in

\setlength{\itemsep}{0pt}
\setlength{\parsep}{0pt}
\setlength{\parskip}{2pt}

\newtheorem{theorem}{Theorem}[section]

\newtheorem{lemma}[theorem]{Lemma}

\newtheorem{remark}[theorem]{Remark}

\newcommand{\E}{\mathbb{E}} 
\newcommand{\ii}{{\rm i}}
\numberwithin{equation}{section}

\allowdisplaybreaks[4]
\begin{document}

\title[stability for inverse random source problems]{Stability for inverse source problems of the stochastic Helmholtz equation with a white noise}

\author{Peijun Li}
\address{Department of Mathematics, Purdue University, West Lafayette, Indiana 47907, USA}
\email{lipeijun@math.purdue.edu}

\author{Ying Liang}
\address{Department of Mathematics, Purdue University, West Lafayette, Indiana 47907, USA}
\email{liang402@purdue.edu}

\thanks{The research is supported in part by the NSF grant  DMS-2208256.}

\subjclass[2010]{35R30, 35R60, 78A46}

\keywords{stochastic Helmholtz equation, inverse source problem, white noise, mild solutions, stability}

\maketitle
\begin{abstract}
This paper is concerned with the stability estimates for inverse source problems of the stochastic Helmholtz equation driven by white noise. The well-posedness is established for the direct source problems, which ensures the existence and uniqueness of solutions. The stability estimates are deduced for the inverse source problems, which aim to determine the strength of the random source. To enhance the stability of the inverse source problems, we incorporate a priori information regarding the regularity and support of the strength. In the case of homogeneous media, a H\"{o}lder stability estimate is established, providing a quantitative measure of the stability for reconstructing the source strength. For the more challenging scenario of inhomogeneous media, a logarithmic stability estimate is presented, capturing the intricate interactions between the source and the varying medium properties.
\end{abstract}

\section{Introduction}

Inverse source problems in wave propagation are of great importance in various scientific and engineering fields, as they allow us to reveal the underlying characteristics and properties of the sources that generate observed wave signals. These problems have wide-ranging implications across disciplines such as seismology, acoustics, non-destructive testing, medical imaging, and telecommunications. By successfully solving inverse source problems, we gain insights into the location, nature, and behavior of hidden or unknown sources, leading to improved understanding, prediction, and control of wave phenomena. Due to the existence of non-radiating sources, inverse source problems typically lack unique solutions when only boundary measurements at a fixed frequency are available \cite{bleistein1977nonuniqueness, devaney1982nonuniqueness, eller2009acoustic}. Furthermore, inverse source problems are unstable and their solutions are highly sensitive to small perturbations in the measurements, leading to significant variations in the reconstructed solutions. In order to tackle these challenges, researchers have made efforts to identify the least energy solution \cite{marengo1999inverse, dml2007siap}. By utilizing multi-frequency data, uniqueness can be ensured and improved stability can be achieved in solving inverse source problems \cite{BLT2010jde, cheng2016increasing, li2020stability}. We refer to \cite{BLLT15} for a comprehensive review of solving inverse scattering problems by using multiple frequency data. 

While significant research has been conducted on deterministic counterparts, there is a growing interest in addressing inverse source problems with uncertainties, which arise from unpredictable environmental conditions, incomplete system information, and random measurement noise. Inverse random source problems have found extensive applications spanning various domains. In environmental monitoring, they are employed to study noise and pollution sources. In the field of seismology and geophysics, these problems are utilized to analyze seismic events such as earthquakes and underground explosions. Furthermore, in oceanographic studies, inverse random source problems are employed to investigate phenomena like turbulence and wave breaking. Unlike deterministic inverse source problems, where the focus is on precisely reconstructing the properties of fixed sources, inverse random source problems involve sources that are inherently random or uncertain. These sources are modeled as random variables or stochastic processes, and the problem addresses the challenges associated with uncertain and random sources by focusing on probabilistic estimation and statistical descriptions of their characteristics.

The inverse random source problem was first investigated in \cite{devaney1979jmp}, where specific instances were examined to determine the auto-correlation of random sources uniquely. In \cite{BCLZ14} and \cite{li2011ip}, a computational framework was established for the inverse random source problem of the one-dimensional Helmholtz equation in homogeneous and inhomogeneous media, respectively, where random sources were modeled as white noise. The approach was extended to address higher-dimensional problems in \cite{BCL16, BCL17, li2017inverse} for the acoustic and elastic wave equations. In recent studies \cite{LHL-cpde2020, li2022inverse, LW21}, uniqueness was discussed for the inverse random source problems, considering the source as a generalized microlocally isotropic Gaussian random field. In \cite{hohage2020ip}, the authors discussed the uniqueness of an inverse source problem in the context of experimental  aeroacoustics by using correlations of measured wave field. While the uniqueness of the inverse random source problems has received considerable attention in the literature, stability results are notably rare. The work presented in \cite{li2017stability} explored the increasing stability of the inverse random source problem associated with the one-dimensional Helmholtz equation in a homogeneous medium, where the source was driven by white noise. In this pioneering study, we aim to address the stability of inverse source problems for the three-dimensional stochastic Helmholtz equation in both homogeneous and inhomogeneous media. 
 
Specifically, we consider the Helmholtz equation with a random source
\begin{equation}\label{eqn:main}
\Delta u +k^2 (1+q) u = f,\quad x\in \mathbb{R}^3,
\end{equation}
where $k>0$ is the wave number, $q$ describes the relative electric permittivity of the medium and is assumed to be compactly supported in the unit ball $B_1=\{x\in\mathbb R^3: |x|<1\}$, $u$ describes the spatial distribution or variation of the wave field, the random source $f$ denotes the distribution of electric current density and is assumed to have a confined range within $B_1$. Moreover, we assume that $q\in C^{1,\alpha}(\overline{B_1}), 0<\alpha<1$ and $ \Im q \geq 0$, i.e., $q$ is allowed to be a complex-valued function. When $q\equiv 0$, i.e., the medium is homogeneous, the stochastic Helmholtz equation \eqref{eqn:main} reduces to
\begin{equation}\label{eqn:mainh}
\Delta u +k^2 u = f,\quad x\in \mathbb{R}^3. 
\end{equation}
Moreover, the random source $f$ is assumed to take the form
\[
 f(x) = \sqrt{\mu}(x) \dot{W}_x,
\]
where $\dot W_x$ is the spatial white noise with $W_x$ being the Brownian motion defined in the complete probability space $(\Omega, \mathcal F, \mathbb P)$, and $\mu\geq 0$ is called the strength of the random source satisfying $\mu\in H_0^s(B_1), s>3$. As usual, the Sommerfeld radiation condition is imposed on the wave field $u$, i.e., $u$ satisfies
\begin{equation}\label{eqn:rc}
\lim_{r\to\infty} r\left(\partial_r u -\ii  ku\right)=0,\quad r=|x|.
\end{equation}

Denote $B_R=\{x\in\mathbb R^3: |x|<R\}$ with the boundary $\partial B_R=\{x\in\mathbb R^3: |x|=R\}$, where $R>1$. By Lemma \ref{lem:boundary}, we have that $u$ is analytic in $\mathbb R^3\setminus \overline{B_1}$. Hence the Cauchy data $u$ and $\partial_\nu u$ are well-defined on $\partial B_R$, where $\nu$ is the outward normal vector on $\partial B_R$. Define the following correlation functions: 
\begin{align*}
F_1(x,y)&=\E[u(x) u(y)],\\
F_2(x,y) &=\E[u(x)\partial_\nu u(y)],\\
F_3(x,y)&=\E[\partial_\nu u(x) \partial_\nu u(y)] ,
\end{align*}
where $x,y\in\partial B_R$. Let 
\begin{equation*}
 M = \max\left\{ \Vert F_1\Vert_{L^2(\partial B_R\times \partial B_R)},\, \Vert F_2\Vert_{L^2(\partial B_R\times \partial B_R)},\, \Vert F_3\Vert_{L^2(\partial B_R\times \partial B_R)}\right\}. 
 \end{equation*}
 
 There are two types of problems to be studied. Given the strength function $\mu$ of the random source, the direct source problem involves examining the well-posedness of either \eqref{eqn:mainh} and \eqref{eqn:rc} in a homogeneous medium or \eqref{eqn:main} and \eqref{eqn:rc} in an inhomogeneous medium. The inverse source problem focuses on determining the strength $\mu$ of the random source $f$ based on the Cauchy data $u$ and $\partial_\nu u$ of the wave field measured on  the boundary $\partial B_R$. The goal of this work is to establish stability estimates for the inverse source problem of the stochastic Helmholtz equation in both homogeneous and inhomogeneous media. 
 
Our main results are summarized in the following two theorems. The former is concerned with the stability of the inverse source problem in a homogeneous medium, while the latter concerns the stability of the inverse source problem in an inhomogeneous medium.

\begin{theorem}\label{thm:white_homo}
 There exists a positive constant $k_0$ depending on $s, M, R$ such that the following estimate holds for $k>k_0$:
\begin{equation}\label{est:homowhite}
\Vert \mu\Vert_{L^\infty(B_1)}\leq  C_1 M^{1-\frac{3}{s}},
\end{equation}
where $C_1$ is a positive constant depending on  $s, k, R$ and satisfies 
\[
 C_1=O(k^{5(1-\frac{3}{s})}),\quad k\to\infty. 
\]
\end{theorem}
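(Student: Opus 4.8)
The plan is to reconstruct the low frequencies of the (zero-extended) strength $\mu$ from the boundary correlation data, to control the high frequencies by the a priori regularity $\mu\in H_0^s(B_1)$, and then to pass from $L^2$ to $L^\infty$ by Sobolev interpolation, the last step being the source of the exponent $1-\frac{3}{s}$. First I would write the mild solution of \eqref{eqn:mainh}, \eqref{eqn:rc} as
\[
u(x)=-\int_{B_1}\Phi_k(x,y)\sqrt{\mu(y)}\,dW_y,\qquad
\Phi_k(x,y)=\frac{e^{\ii k|x-y|}}{4\pi|x-y|},
\]
and test the equation against the plane waves $v_j(x)=e^{\ii\xi_j\cdot x}$ with $|\xi_1|=|\xi_2|=k$, which satisfy $\Delta v_j+k^2v_j=0$. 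Green's second identity on $B_R$ (legitimate since, by Lemma \ref{lem:boundary}, the Cauchy data are well defined on $\partial B_R$) together with $f$ being supported in $B_1$ gives
\[
I_j:=\int_{\partial B_R}\bigl(u\,\partial_\nu v_j-v_j\,\partial_\nu u\bigr)\,ds
=-\int_{B_1}e^{\ii\xi_j\cdot y}\sqrt{\mu(y)}\,dW_y.
\]
Taking expectations and invoking the It\^o isometry for the real white noise, which yields $\E\bigl[\int g\,dW\int h\,dW\bigr]=\int gh$ without conjugation, I obtain the data-to-Fourier identity
\[
\E[I_1I_2]=\int_{B_1}e^{\ii(\xi_1+\xi_2)\cdot y}\mu(y)\,dy=\widehat{\mu}\bigl(-(\xi_1+\xi_2)\bigr),
\]
with $\widehat{\mu}(\eta)=\int_{\mathbb R^3}e^{-\ii\eta\cdot y}\mu(y)\,dy$. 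As $\xi_1,\xi_2$ range over the sphere of radius $k$, the vector $\xi_1+\xi_2$ sweeps out the whole ball $\{|\eta|\le 2k\}$, so this recovers $\widehat{\mu}$ there.

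Next I would bound these recovered frequencies by the measurement $M$. Expanding $I_1I_2$ and taking expectations expresses $\E[I_1I_2]$ as a double boundary integral of $F_1,F_2,F_3$ against kernels assembled from the traces of $v_j$ and $\partial_\nu v_j$; since $|v_j|=1$ and $|\partial_\nu v_j|\le k$ on $\partial B_R$, Cauchy--Schwarz bounds it by a fixed power of $k$ times $\max_i\|F_i\|_{L^2(\partial B_R\times\partial B_R)}\le M$. Combining this pointwise estimate on $\widehat{\mu}$ with the $O(k^3)$ volume of $\{|\eta|\le 2k\}$ and Plancherel's theorem, and tracking the powers of $k$ (the precise power being dictated by the $L^2(\partial B_R\times\partial B_R)$ bounds of the plane-wave kernels), yields $\|\mu_{\le 2k}\|_{L^2(\mathbb R^3)}\le C(R)\,k^{5}M$, where $\mu_{\le 2k}$ is the inverse transform of $\widehat{\mu}\,\mathbf{1}_{\{|\eta|\le 2k\}}$. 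For the complementary frequencies I would use $\mu\in H_0^s(B_1)$ to write $\|\mu-\mu_{\le 2k}\|_{L^2}\le(1+4k^2)^{-s/2}\|\mu\|_{H^s}$, which for $k$ large is dominated by the data term; this is exactly where the threshold $k_0=k_0(s,M,R)$ enters, since one needs $(2k)^{-s}\|\mu\|_{H^s}\lesssim k^5M$. Hence $\|\mu\|_{L^2(\mathbb R^3)}\le C(R)\,k^{5}M$ for all $k>k_0$.

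Finally I would upgrade from $L^2$ to $L^\infty$ by interpolation. Since $s>3$, the embedding $H^3(\mathbb R^3)\hookrightarrow L^\infty(\mathbb R^3)$ and the Gagliardo interpolation inequality give
\[
\|\mu\|_{L^\infty(B_1)}\le C\|\mu\|_{H^3(\mathbb R^3)}
\le C\|\mu\|_{L^2(\mathbb R^3)}^{1-\frac{3}{s}}\|\mu\|_{H^s(\mathbb R^3)}^{\frac{3}{s}},
\]
because $3=\frac{3}{s}\cdot s$ with $0<\frac{3}{s}<1$. Substituting $\|\mu\|_{L^2}\le C(R)k^5M$ and absorbing the fixed factor $\|\mu\|_{H^s}^{3/s}$ into the constant produces
\[
\|\mu\|_{L^\infty(B_1)}\le C_1\,M^{1-\frac{3}{s}},\qquad C_1=O\!\bigl(k^{5(1-\frac{3}{s})}\bigr),
\]
which is \eqref{est:homowhite}.

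I expect the main obstacle to be the rigorous treatment of the stochastic step: the mild solution driven by spatial white noise is a low-regularity (distributional) object, so justifying Green's identity, the interchange of expectation with the boundary integrals, and the It\^o isometry in the complex-valued, $k$-dependent setting requires care, as does the sharp bookkeeping of the powers of $k$ in the low-frequency estimate that fixes the growth rate of $C_1$.
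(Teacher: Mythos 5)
Your proposal is correct in substance, and its first half is exactly the paper's mechanism: Lemmas \ref{lem:Ewhitehomo} and \ref{lem:Fourierlow_white_homo} likewise test against the two plane waves $e^{\ii\xi^{(i)}\cdot x}$ with $\xi^{(1)}+\xi^{(2)}=-\gamma$ and $\xi^{(i)}\cdot\xi^{(i)}=k^2$, use Green's identity plus the It\^{o} isometry (without conjugation) to identify $\E[\langle f,U_1\rangle\langle f,U_2\rangle]=(2\pi)^3\hat\mu(\gamma)$ for $|\gamma|<2k$, and bound this by $M$ via Cauchy--Schwarz against $F_1,F_2,F_3$. Where you genuinely diverge is the assembly of the $L^\infty$ estimate. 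The paper stays on the Fourier side in $L^1_\gamma$: it writes $\Vert\mu\Vert_{L^\infty(B_1)}\le\int_{\mathbb R^3}|\hat\mu(\gamma)|\,d\gamma$, uses the data bound $|\hat\mu(\gamma)|\le c_4M$ on $|\gamma|\le\rho$ and the Paley--Wiener decay of Lemma \ref{PWS} on $|\gamma|>\rho$, and obtains the exponent $1-\frac{3}{s}$ by optimizing the $M$-dependent cutoff $\rho\sim M^{-1/s}$; the requirement $\rho<2k$ is precisely what generates $k_0(s,M,R)$. You instead cut at the fixed radius $2k$, pass to an $L^2$ bound on $\mu_{\le 2k}$, absorb the tail via $(1+4k^2)^{-s/2}\Vert\mu\Vert_{H^s}$ (your source of $k_0$), and extract the exponent from the interpolation $\Vert\mu\Vert_{L^\infty}\lesssim\Vert\mu\Vert_{H^3}\le\Vert\mu\Vert_{L^2}^{1-3/s}\Vert\mu\Vert_{H^s}^{3/s}$. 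Both routes yield the same exponent; yours makes the a priori bound on $\Vert\mu\Vert_{H^s}$ appear explicitly in $C_1$ and $k_0$, but this is no worse than the paper, whose constant $c_0$ in Lemma \ref{PWS} implicitly normalizes a norm of $\mu$ as well. A side benefit of your version: bounding plane waves directly by $|v_j|=1$ and $|\partial_\nu v_j|\le k$ gives an $O(k^2)$ constant in the low-frequency bound, sharper than the paper's $c_4=O(k^5)$, which comes from the mean-value-property/Caccioppoli machinery of Lemma \ref{lem:Ewhitehomo} --- generality the paper only needs later, for the CGO solutions in the inhomogeneous case.

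The one real gap is the point you flag yourself: the stochastic justification. The mild solution $u$ is a stochastic integral, so the identity $\int_{\partial B_R}(u\,\partial_\nu v_j-v_j\,\partial_\nu u)\,ds=-\int_{B_1}v_j\sqrt{\mu}\,dW$, the interchange of expectation with the boundary integrals, and the expression of $\E[I_1I_2]$ through $F_1,F_2,F_3$ cannot simply be asserted. The bulk of the paper's proof of Lemma \ref{lem:Ewhitehomo} is devoted to exactly this: it replaces $\dot W$ by the piecewise-constant approximation $\dot W^N\in L^p$, applies Green's identity to the resulting classical solution $u^N$, proves $\E[\langle f^N,U_1\rangle\langle f^N,U_2\rangle]\to\E[\langle f,U_1\rangle\langle f,U_2\rangle]$ using the H\"{o}lder continuity of $\sqrt{\mu}$, and separately shows $M^N\to M$ so that the constant surviving the limit is the measured quantity $M$. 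Your sketch needs this regularization-and-limit argument (or an equivalent) to be complete; with it supplied, the rest of your proof goes through and establishes \eqref{est:homowhite} with $C_1=O(k^{5(1-\frac{3}{s})})$, indeed with room to spare in the power of $k$.
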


\begin{theorem}\label{thm:white_inhomo}
There exists a positive constant $C_2$ depending on  $s, k, R$ such that the following estimate holds for $k>0$:
\begin{equation}\label{est:inhomowhite}
\Vert \mu\Vert_{L^\infty(B_1)}\leq C_2\left(\ln(3+M^{-1})\right)^{1-\frac{s}{3}},
\end{equation}
where $C_2$ satisfies
\[
 C_2=O(k^{5(\frac{s}{3}-1)}),\quad k\to\infty. 
\]

\end{theorem}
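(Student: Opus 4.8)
The plan is to turn the statistical boundary data into a deterministic inverse source problem for the two-point correlation and then probe the unknown strength with complex geometric optics (CGO) solutions, the variable coefficient $q$ being exactly what degrades the Hölder rate of the homogeneous case into a logarithmic one. First I would use the mild solution together with the It\^o isometry to write the correlation $w(x,y):=\E[u(x)u(y)]=\int_{B_1}G(x,z)G(y,z)\mu(z)\,dz$, where $G$ is the outgoing Green function of $\Delta+k^2(1+q)$. Since $\mu$ and $q$ are supported in $B_1$, $w$ solves the homogeneous Helmholtz equation in each variable on $\mathbb R^3\setminus\overline{B_1}$, is analytic there by Lemma \ref{lem:boundary}, and its Cauchy data on $\partial B_R\times\partial B_R$ are precisely $F_1,F_2,F_3$. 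Applying Green's identity twice---first in the annulus $B_R\setminus\overline{B_1}$, where $q\equiv0$, to transport the data from $\partial B_R$ to $\partial B_1$, and then inside $B_1$, where $(\Delta_x+k^2(1+q(x)))w(x,y)=\mu(x)G(y,x)$---I would derive the key reciprocity identity
\begin{equation*}
\int_{B_1}\mu(z)\,v(z)\,\tilde v(z)\,dz=\mathcal D[v,\tilde v],
\end{equation*}
valid for every pair of solutions $v,\tilde v$ of $\Delta w+k^2(1+q)w=0$ in $B_R$, where $\mathcal D$ is an explicit bilinear functional of $F_1,F_2,F_3$ obeying $|\mathcal D[v,\tilde v]|\leq C M\,\|v\|_{H^1(\partial B_R)}\|\tilde v\|_{H^1(\partial B_R)}$.

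Next, for a prescribed $\xi\in\mathbb R^3$ I would insert CGO solutions $v_j=e^{\zeta_j\cdot z}(1+\psi_j)$, $j=1,2$, with $\zeta_j\cdot\zeta_j=-k^2$, $\zeta_1+\zeta_2=-\ii\xi$, and $|\Im\zeta_j|=\tau$ a large free parameter, the remainders satisfying the standard bound $\|\psi_j\|_{L^2(B_1)}\leq C k^2\|q\|_{L^\infty}\,\tau^{-1}$. Because $v_1v_2=e^{-\ii\xi\cdot z}(1+\psi_1+\psi_2+\psi_1\psi_2)$, the identity yields
\begin{equation*}
\widehat\mu(\xi)=\mathcal D[v_1,v_2]-\int_{B_1}\mu\,e^{-\ii\xi\cdot z}(\psi_1+\psi_2+\psi_1\psi_2)\,dz.
\end{equation*}
Since $|e^{\zeta_j\cdot z}|\leq e^{\tau R}$ on $\partial B_R$, the first term is bounded by $Ce^{c\tau}M$, while the remainder integral is bounded by $C\tau^{-1}\|\mu\|_{L^2(B_1)}$, giving the pointwise estimate $|\widehat\mu(\xi)|\leq C\big(e^{c\tau}M+\tau^{-1}\|\mu\|_{L^2(B_1)}\big)$ on a suitable range of $\xi$.

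I would then split $\|\mu\|_{L^2}^2=\int_{|\xi|\le\rho}|\widehat\mu|^2+\int_{|\xi|>\rho}|\widehat\mu|^2$, estimating the high-frequency tail by $\rho^{-2s}\|\mu\|_{H^s}^2$ and the low part by the previous step. Choosing $\tau\sim\ln(3+M^{-1})$ renders $e^{c\tau}M$ negligible against the remainder, and after selecting a cut-off $\rho$ as an appropriate power of $\ln(3+M^{-1})$ so that the self-referential low-frequency term can be absorbed, one arrives at $\|\mu\|_{L^2(B_1)}\leq C(\ln(3+M^{-1}))^{-\frac{s}{3}}\|\mu\|_{H^s}$. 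The Gagliardo--Nirenberg interpolation $\|\mu\|_{L^\infty}\leq C\|\mu\|_{L^2}^{1-\frac{3}{s}}\|\mu\|_{H^s}^{\frac{3}{s}}$, valid in $\mathbb R^3$ for $s>3$, then produces the advertised rate, since $-\tfrac{s}{3}\big(1-\tfrac{3}{s}\big)=1-\tfrac{s}{3}$; tracking the $k$-powers in the CGO bounds and in $\mathcal D$ through the absorption step yields $C_2=O(k^{5(\frac{s}{3}-1)})$.

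The crux, and the main obstacle, is the CGO construction for the variable coefficient operator $\Delta+k^2(1+q)$ with remainder estimates uniform and explicit in both $k$ and $\tau$: it is precisely the tension between the exponential amplification $e^{c\tau}$ of the data functional $\mathcal D$ and the $\tau^{-1}$ decay of the CGO remainder that forces $\tau\sim\ln(3+M^{-1})$ and thereby replaces the bounded real plane waves available in the homogeneous case. Carrying the sharp $k$-dependence through the optimization and absorption, so as to recover the stated exponent $5(\frac{s}{3}-1)$, is the most delicate bookkeeping of the argument.
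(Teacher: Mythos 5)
Your skeleton is the same as the paper's: an identity expressing $\int_{B_1}\mu\,U_1U_2\,dz$ as a bilinear functional of the correlation data, probed by CGO solutions of $\Delta+k^2(1+q)$ normalized so that $\zeta_1+\zeta_2=-\mathrm{i}\xi$, giving $|\hat{\mu}(\xi)|\lesssim e^{c\tau}M+\tau^{-1}\Vert\mu\Vert$, followed by a low/high frequency split, absorption of the self-referential term, and the choice $\tau\sim\ln(3+M^{-1})$. Within that skeleton you do two things differently, both defensible: you derive the data identity deterministically from the correlation kernel $w(x,y)=\int_{B_1}G(x,z)G(y,z)\mu(z)\,dz$ rather than through the paper's piecewise-constant approximation of white noise (the convergence arguments $f^N\to f$, $M^N\to M$ that occupy most of the paper's key lemma), and you close in $L^2$ via Plancherel plus Gagliardo--Nirenberg interpolation instead of the paper's $L^1$ Fourier inversion against the Paley--Wiener decay; both finishes give the exponent $1-\frac{s}{3}$, and yours has the merit of displaying the dependence on $\Vert\mu\Vert_{H^s}$ explicitly. (For the kernel representation you would still need to convert the paper's mild solution $u=-k^2\mathcal{G}[qu]+\mathcal{G}[\sqrt{\mu}\dot{W}]$ into the form $u(x)=\int_{B_1} G(x,z)\sqrt{\mu}(z)\,dW_z$ with $G$ the Green function of the perturbed operator, a short resolvent argument.)

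The genuine gap is at the step ``the first term is bounded by $Ce^{c\tau}M$.'' Your bound on $\mathcal{D}$ is stated in terms of boundary norms of the test solutions, and in fact it must involve the full Cauchy data, since $\mathcal{D}$ pairs $F_2,F_3$ against normal derivatives, which $\Vert v\Vert_{H^1(\partial B_R)}$ (a tangential norm on the sphere) does not control. To apply it to CGO solutions you need $v_j=e^{\zeta_j\cdot z}(1+\psi_j)$ \emph{and} $\partial_\nu v_j$ controlled on $\partial B_R$; but the only stated control of the remainders is $\Vert\psi_j\Vert_{L^2(B_1)}\lesssim k^2\Vert q\Vert_{L^\infty}\tau^{-1}$, an interior $L^2$ bound (indeed only on $B_1$), which yields no trace information, and $|e^{\zeta_j\cdot z}|\le e^{\tau R}$ alone says nothing about $1+\psi_j$ on the sphere. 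Closing this is exactly what the paper's architecture is built for: its Lemma 3.5 upgrades the CGO remainder to $C^3(\overline{B_{\hat R}})$ (this is why $q\in C^{1,\alpha}$ is assumed), and then, because the CGO solutions satisfy the unperturbed equation $\Delta u+k^2u=0$ in the annulus $B_{\hat R}\setminus\overline{B_1}$ where $q\equiv 0$, the mean value property and Caccioppoli's inequality convert the Cauchy data norms on $\partial B_R$ into $Ck^{3/2}\Vert U_i\Vert_{L^2(B_{\hat R})}$ and $Ck^{5/2}\Vert U_i\Vert_{L^2(B_{\hat R})}$, quantities the CGO construction does control. This step is also precisely where the factor $k^5$, and hence $C_2=O(k^{5(\frac{s}{3}-1)})$, originates; without it your assertion that the bookkeeping ``yields $5(\frac{s}{3}-1)$'' has nothing backing it, and your boundary-norm formulation would produce different powers of $k$. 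The gap is repairable, either by the paper's mean-value route or by elliptic interior-to-boundary estimates for $\psi_j$ (taking care that their constants do not degrade in $\tau$), but as written the chain from the CGO remainder estimate to the bound on the data term does not close.
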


For the first time, we show the stability of the inverse random source problems by using the correlation of the Cauchy data at a fixed frequency. By Theorems \ref{thm:white_homo} and \ref{thm:white_inhomo}, the following observations can be made. First, the stability estimates \eqref{est:homowhite} and \eqref{est:inhomowhite} imply the uniqueness of the inverse random source problems for the Helmholtz equation at a fixed frequency. Previous uniqueness results typically require multi-frequency data for the inverse source problems in deterministic settings. The difference in our findings may be attributed to the characteristics of the random source and the data itself. Specifically, the white noise acts as a radiating source, while the type of correlation data can offer additional insights into the unknowns. Second, the inverse random source problems have a H\"{o}lder stability and a logarithmic stability for homogeneous and inhomogeneous media, respectively. Moreover, the H\"{o}lder stability may approach to the optimal Lipschitz stability if the strength function $\mu$ is sufficiently smooth. Third, the estimates \eqref{est:homowhite} and \eqref{est:inhomowhite} provide the explicit dependence of the constants on the wavenumber $k$, which is of practical significance but received limited attention in the literature on the stability estimates of the inverse medium problems \cite{hahner2001new,isaev2013new}.

The structure of the paper is as follows. In Section \ref{sec:2}, we address the well-posedness of the direct
source problems. Section \ref{sec:3} is devoted to inverse source problems where the stability estimates are established for both cases of the homogeneous and inhomogeneous media. The paper concludes by providing general observations and remarks in Section \ref{sec:4}.

\section{The direct problems}\label{sec:2}

In this section, we focus on examining the well-posedness of the direct problems, specifically the validation of the Cauchy data $u$ and $\partial_\nu u$ on the boundary $\partial B_R$.  

Throughout, we use $C$ to denote a generic constant and $c_j$ to represent a constant in some intermediate step. The value of the generic constant is not required and may change step by step, but its meaning will always be clear from the context.

\begin{lemma}
Let $s > 3$ and consider the function $\mu \in H_0^s(B_1)$ with $\mu\geq 0$. Then, it follows that $\sqrt{\mu} \in C^{0,\eta}(B_1)$ for some $\eta \in (0, \frac{1}{2})$.
\end{lemma}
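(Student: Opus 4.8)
The plan is to obtain the Hölder regularity of $\sqrt{\mu}$ in two moves: first upgrade the Sobolev regularity of $\mu$ to classical Lipschitz regularity, and then transfer this to $\sqrt{\mu}$ through an elementary pointwise inequality that is insensitive to the zero set of $\mu$. The nonnegativity hypothesis $\mu \ge 0$ is what makes $\sqrt{\mu}$ real and well defined, and it is also the source of the only genuine subtlety, namely the possible blow-up of $\nabla\sqrt{\mu}$ where $\mu$ vanishes.

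First I would invoke the Sobolev embedding theorem on the bounded smooth convex domain $B_1 \subset \mathbb{R}^3$. Since $n = 3$ and $s > 3$, one has $s - \frac{n}{2} = s - \frac{3}{2} > \frac{3}{2} > 1$, so $H^s(B_1)$ embeds continuously into $C^{1}(\overline{B_1})$ (indeed into $C^{1,\gamma}(\overline{B_1})$ for a suitable $\gamma > 0$). In particular $\nabla\mu$ is bounded with $\Vert \nabla\mu\Vert_{L^\infty(B_1)} \le C\Vert\mu\Vert_{H^s(B_1)}$, and because $B_1$ is convex the mean value inequality shows that $\mu$ is globally Lipschitz on $B_1$: there is $L > 0$ with $|\mu(x) - \mu(y)| \le L|x - y|$ for all $x, y \in B_1$.

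Second, I would use the elementary inequality $|\sqrt{a} - \sqrt{b}| \le \sqrt{|a-b|}$, valid for all $a, b \ge 0$; it follows from $(\sqrt{a} - \sqrt{b})^2 = a + b - 2\sqrt{ab} \le a + b - 2\min\{a,b\} = |a - b|$. Applying this with $a = \mu(x)$ and $b = \mu(y)$, and combining with the Lipschitz bound, gives $|\sqrt{\mu(x)} - \sqrt{\mu(y)}| \le \sqrt{L}\,|x - y|^{1/2}$ for all $x, y \in B_1$. Hence $\sqrt{\mu} \in C^{0,1/2}(B_1)$, and since $B_1$ is bounded with finite diameter this yields $\sqrt{\mu} \in C^{0,\eta}(B_1)$ for every $\eta \in (0, \frac{1}{2})$, which is the desired conclusion.

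The step I expect to be the conceptual heart of the matter is the passage across the zero set $\{\mu = 0\}$: there $\nabla\sqrt{\mu} = \nabla\mu/(2\sqrt{\mu})$ may be unbounded, so one cannot simply bound the gradient of $\sqrt{\mu}$ to deduce its regularity. A Glaeser-type argument would control $|\nabla\mu|$ by a power of $\mu$ and thereby tame this singularity, but such estimates demand essentially $C^{1,1}$ regularity, which is more than the embedding furnishes when $s$ is only slightly larger than $3$. The elementary inequality above circumvents the zero set entirely, because it holds uniformly for all nonnegative arguments; consequently no delicate analysis near $\{\mu = 0\}$ is required, and the plain Lipschitz regularity of $\mu$ is exactly enough to close the argument.
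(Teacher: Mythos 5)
Your proof is correct and takes essentially the same route as the paper: Sobolev embedding to get classical H\"{o}lder/Lipschitz regularity of $\mu$, followed by the elementary inequality $|\sqrt{a}-\sqrt{b}|\leq\sqrt{|a-b|}$ for $a,b\geq 0$, which is exactly what the paper proves in disguise through its $\min\left\{ |\sqrt{\mu}(x)+\sqrt{\mu}(y)|,\, \frac{C|x-y|^{1-\varepsilon}}{|\sqrt{\mu}(x)+\sqrt{\mu}(y)|}\right\}$ argument. The only (harmless) difference is that you invoke the embedding $H^s\hookrightarrow C^1$ (valid since $s>3>\frac{5}{2}$) to make $\mu$ Lipschitz and hence $\sqrt{\mu}\in C^{0,1/2}(B_1)$, which is marginally sharper than the paper's use of $H^s\subset C^{0,1-\varepsilon}$ giving only the exponents $\eta=\frac{1-\varepsilon}{2}<\frac{1}{2}$.
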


\begin{proof}
Applying the Sobolev embedding theorem, we know that $W^{s_0,r}(\mathbb{R}^3)\subset C^{0,\alpha_0}(\mathbb{R}^3)$ holds when $3 < s_0r$ and $\alpha_0 = s_0 - \frac{3}{r} \in (0,1)$. By choosing $r=2$, we obtain $H^{s_0}(\mathbb{R}^3)\subset C^{0, s_0-\frac{3}{2}}(\mathbb{R}^3)$ for $s_0\in \left(\frac{3}{2}, \frac{5}{2}\right)$. Since $s> 3$, we can deduce that $H^s(\mathbb{R}^3)\subset C^{0,1-\varepsilon}(\mathbb{R}^3)$ for any $\varepsilon\in (0,1)$. Consequently, for $\mu\in H_0^s(B_1)$, we have $\mu\in C^{0,1-\varepsilon}(\mathbb{R}^3)$. Moreover, since $\mu$ has compact support within $B_1$, it follows that $\mu\in C^{0,1-\varepsilon}(B_1)$. Thus, there exists a positive constant $C$ such that 
for all $x,y \in B_1$, 
\[
 |\mu(x)-\mu(y)|\leq C|x-y|^{1-\varepsilon},
\]
which gives for $\mu(x)\neq 0$ or $\mu(y)\neq 0$ that 
\[
 |\sqrt{\mu}(x)-\sqrt{\mu}(y)|=\frac{|\mu(x)-\mu(y)|}{|\sqrt{\mu}(x)+\sqrt{\mu}(y)|}\leq \frac{C|x-y|^{1-\varepsilon}}{|\sqrt{\mu}(x)+\sqrt{\mu}(y)|}. 
\]
It is clear to note 
\begin{align*}
|\sqrt{\mu}(x)-\sqrt{\mu}(y)|\leq  \min\left\{ |\sqrt{\mu}(x)+\sqrt{\mu}(y)|, \, \frac{C|x-y|^{1-\varepsilon}}{|\sqrt{\mu}(x)+\sqrt{\mu}(y)|}\right\}\leq \sqrt{C}|x-y|^{\frac{1}{2}-\frac{\varepsilon}{2}}, 
\end{align*}
which shows that $\sqrt{\mu}$ belongs to the H\"{o}lder space $C^{0,\eta}(B_1)$ with $\eta\in(0,\frac{1}{2})$.
\end{proof}

It is shown in \cite[Theorem 2.7]{BCL16} that the direct problem \eqref{eqn:mainh} and \eqref{eqn:rc} is well-posed. The obtained well-posedness result is summarized in the following lemma. 

\begin{lemma}
 For any $k>0$, the direct problem \eqref{eqn:mainh} and \eqref{eqn:rc} has a unique, continuous stochastic process $u: B_R\to \mathbb{C}$ that fulfills
\begin{equation}\label{eq:repre_homo}
u(x) = \mathcal{G}[\sqrt{\mu}\dot{W}](x),\quad   \mathbb{P}\text{-}a.s.,
\end{equation}
where
\[
\mathcal{G}[\sqrt{\mu} \dot{W}](x) = \int_{B_1} G(x,y) \sqrt{\mu}(y) dW_y
\]
with $G$ being the Green's function of the three-dimensional Helmholtz equation, i.e., 
\[
G(x,y) = -\frac{1}{4\pi} \frac{e^{\ii k|x-y|}}{|x-y|}.
\]
\end{lemma}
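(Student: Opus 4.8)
The plan is to build the solution explicitly as a stochastic convolution against the outgoing fundamental solution and then to check the three assertions of the lemma---existence (well-definedness), continuity, and uniqueness---separately. Because the spatial white noise $\dot W_x$ is only a distribution, the equation \eqref{eqn:mainh} cannot be interpreted pointwise, so I would work with the mild formulation in which $u$ is \emph{defined} as the convolution of the random source with the Green's function $G$ that already encodes the Sommerfeld radiation condition \eqref{eqn:rc}. Replacing the formal product $\dot W_y\,dy$ by $dW_y$, this convolution is exactly the Wiener integral $\mathcal{G}[\sqrt{\mu}\dot W](x)=\int_{B_1}G(x,y)\sqrt{\mu}(y)\,dW_y$ appearing in \eqref{eq:repre_homo}; the task is then to show that this object is a well-defined, continuous process and that it is the unique mild solution.

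For each fixed $x\in B_R$ the integrand $G(x,\cdot)\sqrt{\mu}(\cdot)$ is deterministic, so $u(x)$ is a centered complex Gaussian variable that is well-defined as soon as $G(x,\cdot)\sqrt{\mu}(\cdot)\in L^2(B_1)$. By the Wiener isometry,
\[
\E|u(x)|^2=\int_{B_1}|G(x,y)|^2\,\mu(y)\,dy\leq\frac{\|\mu\|_{L^\infty(B_1)}}{16\pi^2}\int_{B_1}\frac{dy}{|x-y|^2}.
\]
Here $\mu\in H_0^s(B_1)\subset L^\infty(B_1)$ by the preceding lemma, and the kernel $|x-y|^{-2}$ is locally integrable in $\mathbb{R}^3$, so the right-hand side is finite uniformly in $x$. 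This establishes $u(x)$ for every $x\in B_R$.

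Continuity would follow from the Kolmogorov continuity theorem. Applying the Wiener isometry to the increment gives
\[
\E|u(x)-u(x')|^2=\int_{B_1}|G(x,y)-G(x',y)|^2\,\mu(y)\,dy,
\]
and the crucial step is to bound this by $C|x-x'|$. I would split $B_1$ into the near region, consisting of points within distance $2|x-x'|$ of $x$ or $x'$, where each singular term is integrated separately, and the far region, where the elementary bound $|G(x,y)-G(x',y)|\lesssim |x-x'|\,|x-y|^{-2}+k|x-x'|\,|x-y|^{-1}$ holds; a computation in spherical coordinates then yields $\E|u(x)-u(x')|^2\leq C|x-x'|$. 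Since the increments are Gaussian, all higher moments are controlled by $\E|u(x)-u(x')|^{2p}=c_p\big(\E|u(x)-u(x')|^2\big)^p\leq c_p|x-x'|^{p}$, and taking $p>3$ lets Kolmogorov's theorem furnish a continuous---indeed H\"{o}lder---modification $u:B_R\to\mathbb{C}$.

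Uniqueness reduces to the deterministic uniqueness of the exterior Helmholtz problem: for $k>0$ the only radiating solution of $\Delta v+k^2v=0$ obeying \eqref{eqn:rc} is $v\equiv 0$ (Rellich's lemma together with unique continuation), so $G$ is the unique outgoing fundamental solution and any two mild solutions driven by the same $\sqrt{\mu}\dot W$ must coincide $\mathbb{P}$-almost surely. I expect the main obstacle to be the increment estimate of the third step: controlling the $L^2(B_1)$ norm of the difference of the singular kernels $G(x,\cdot)-G(x',\cdot)$ near the diagonal $y=x$, which is precisely where the H\"{o}lder regularity of the process is dictated and where the local integrability of $|x-y|^{-2}$ must be exploited with care.
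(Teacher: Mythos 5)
Your proposal is correct and follows essentially the same route as the proof the paper relies on: the paper does not prove this lemma itself but cites \cite[Theorem 2.7]{BCL16}, whose argument is precisely the mild-solution construction you give --- well-definedness of the Wiener integral for each $x$ via the It\^{o} isometry and the local integrability of $|x-y|^{-2}$ in $\mathbb{R}^3$, the increment bound $\E|u(x)-u(x')|^2\leq C|x-x'|$ fed into Gaussian moment bounds and Kolmogorov's continuity theorem, and uniqueness from the deterministic uniqueness of the radiating Helmholtz problem. The only cosmetic point is that for a complex, not necessarily circularly symmetric, Gaussian increment one should write the moment relation as an inequality $\E|u(x)-u(x')|^{2p}\leq C_p\left(\E|u(x)-u(x')|^2\right)^p$ rather than an equality, which is all that Kolmogorov's criterion requires.
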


In \cite[Theorem 2.4]{li2017inverse}, it is demonstrated that for inhomogeneous media, the direct problem \eqref{eqn:main} and \eqref{eqn:rc} admits a unique mild solution in two dimensions. By adapting the proof, the well-posedness of the direct problem \eqref{eqn:main} and \eqref{eqn:rc} can be extended to three dimensions. The well-posedness is summarized in the subsequent lemma, and for the sake of completeness, a brief sketch of the proof is provided.

\begin{lemma}\label{lem:unique}
For any $k>0$, the direct problem \eqref{eqn:main} and \eqref{eqn:rc} has a unique, continuous stochastic process $u: B_R\to \mathbb{C}$, which satisfies
\begin{equation}\label{eq:repre_inhomo}
u(x) = -k^2 \mathcal{G}[qu](x) + \mathcal{G}[\sqrt{\mu}\dot{W}](x),\quad  \mathbb{P}\text{-}a.s.,
\end{equation}
where
\[
\mathcal{G}[qu](x) = \int_{B_1} G(x,y) q(y)u(y) dy,\quad  \mathcal{G}[\sqrt{\mu} \dot{W}](x) = \int_{B_1} G(x,y) \sqrt{\mu}(y) dW_y. 
\]
\end{lemma}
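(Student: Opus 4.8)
The plan is to recast the stochastic problem as a Lippmann--Schwinger integral equation, solve it pathwise by the Fredholm alternative, and then transfer the result back to the stochastic setting using the linearity of the solution operator. Writing \eqref{eqn:main} as $\Delta u + k^2 u = -k^2 q u + \sqrt{\mu}\dot{W}$ and convolving with the free-space Green's function $G$, which satisfies $(\Delta+k^2)G(\cdot,y)=\delta_y$ together with the radiation condition \eqref{eqn:rc}, any mild solution must obey
\begin{equation}\label{plan:LS}
u(x) + k^2\mathcal{G}[qu](x) = \mathcal{G}[\sqrt{\mu}\dot{W}](x),
\end{equation}
which is exactly \eqref{eq:repre_inhomo}. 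Since $q$ is supported in $\overline{B_1}$, the integral in $\mathcal{G}[qu]$ depends only on $u|_{\overline{B_1}}$, so it suffices to solve \eqref{plan:LS} as a closed equation for $u|_{\overline{B_1}}$; the values on the remainder of $B_R$ are then recovered by simply evaluating the right-hand side of \eqref{eq:repre_inhomo}, which inherits continuity and the Sommerfeld radiation condition from $G$. By the preceding lemma the source term $\mathcal{G}[\sqrt{\mu}\dot{W}]$ is a well-defined continuous stochastic process, hence for each fixed sample point a continuous function on $\overline{B_1}$.

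First I would verify that the operator $K := k^2\mathcal{G}[q\,\cdot\,]$ is compact on $C(\overline{B_1})$. In three dimensions the kernel $G(x,y)q(y)$ behaves like $|x-y|^{-1}$ near the diagonal and is continuous off it; this singularity is integrable over $B_1\subset\mathbb{R}^3$, and together with the boundedness of $q$ it yields compactness of $K$ by the standard theory of weakly singular integral operators. This is precisely the point at which the two-dimensional argument of \cite[Theorem 2.4]{li2017inverse} has to be re-examined: the logarithmic singularity of the two-dimensional Green's function is replaced by the $|x-y|^{-1}$ singularity, but since both remain integrable the relevant mapping and compactness properties are preserved.

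Next I would apply the Fredholm alternative: $(I+K)u=g$ has a unique solution in $C(\overline{B_1})$ for every $g$ if and only if $(I+K)u=0$ forces $u=0$. Given a solution of the homogeneous equation, its extension $u(x)=-k^2\mathcal{G}[qu](x)$ to $\mathbb{R}^3$ solves $\Delta u + k^2(1+q)u=0$ and satisfies \eqref{eqn:rc}. Applying Green's identity on a large ball, taking imaginary parts, and using the hypothesis $\Im q\ge 0$ forces the far-field pattern to vanish; Rellich's lemma then gives $u\equiv 0$ outside $\overline{B_1}$, and the unique continuation principle propagates this into $B_1$. Thus $I+K$ is injective, and by the Fredholm alternative the bounded inverse $(I+K)^{-1}$ exists on $C(\overline{B_1})$.

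Finally, I would assemble pathwise solvability and transfer it to the stochastic process. For each sample point $u|_{\overline{B_1}}=(I+K)^{-1}\mathcal{G}[\sqrt{\mu}\dot{W}]$ is the unique continuous solution of \eqref{plan:LS}, and the extension formula \eqref{eq:repre_inhomo} then produces the unique continuous solution on $B_R$. Because $(I+K)^{-1}$ and the extension map are bounded linear operators and $\mathcal{G}[\sqrt{\mu}\dot{W}]$ is a $C(\overline{B_1})$-valued random variable, the composition $u$ is again a continuous stochastic process, and pathwise uniqueness upgrades to $\mathbb{P}$-almost sure uniqueness. The main obstacle I anticipate is not the Fredholm machinery, which is routine, but rather confirming that the three-dimensional volume potential retains the compactness and continuity used in the planar case, and that the measurability of the source genuinely passes through the solution operator, so that $u$ is a bona fide continuous stochastic process rather than merely an unrelated family of pathwise solutions.
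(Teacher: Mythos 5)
Your proposal is correct, but it takes a genuinely different route from the paper. You solve the Lippmann--Schwinger equation \eqref{eq:repre_inhomo} pathwise: compactness of the weakly singular volume potential $K=k^2\mathcal{G}[q\,\cdot\,]$ on $C(\overline{B_1})$, injectivity of $I+K$ via Green's identity, $\Im q\ge 0$, Rellich's lemma and unique continuation, then the Fredholm alternative gives a deterministic bounded inverse $(I+K)^{-1}$, which you apply to the continuous modification of $u_0=\mathcal{G}[\sqrt{\mu}\dot{W}]$; since that inverse is a fixed bounded linear operator, measurability and continuity indeed pass through, so the obstacles you flag at the end are non-issues. The paper instead never invokes Fredholm theory: it approximates the white noise by the piecewise-constant noise $\dot{W}^N$ of \eqref{eq:defW}, solves the approximate problems $\Delta u^N+k^2(1+q)u^N=\sqrt{\mu}\dot{W}^N$ (which have $L^2$ right-hand sides) by citing the deterministic well-posedness result \cite[Theorem 2.3]{bao2021adaptive}, shows $\{u^N\}$ is Cauchy in $L^2$ via the stability estimate applied to the difference equation, and passes to the limit; uniqueness is likewise obtained by reducing to the homogeneous problem and citing the same theorem. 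What each approach buys: yours is more self-contained and constructs the solution pathwise in one step, with the role of the hypothesis $\Im q\ge 0$ made explicit rather than hidden in a citation; the paper's approximation route, besides existence, yields the mean-square convergence $\E[\Vert u^N-u\Vert^2_{L^2}]\to 0$ of the finite-dimensional approximations, which is not a by-product of your argument but is explicitly reused later (the inhomogeneous-media analogue of Lemma \ref{lem:Ewhitehomo} in Section \ref{sec:3} invokes precisely this convergence ``from the proof of Lemma \ref{lem:unique}''). So your route proves the lemma as stated at least as cleanly, while the paper's route front-loads approximation machinery needed for the downstream stability analysis; if your proof were adopted, that convergence would have to be established separately.
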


\begin{proof}
First we employ a piecewise constant approximation for the white noise $\dot{W}_x$ (cf. \cite{cao2008finite}). Let $\mathcal{T}_N$ be the union of tetrahedra $K_j$, which form a regular triangulation of $B_1$. Denote a piecewise function
\begin{equation}\label{eq:defW}
\dot{W}_x^N = \sum_{j=1}^N |K_j|^{-1} \int_{K_j} dW_x \chi_j(x),
\end{equation}
where $|K_j|$ is the volume of the tetrahedron $K_j$ and $\chi_j$ represents the characteristic function of $K_j$. We define a sequence $\{ u^N_0\}$ as follows: 
\begin{equation*}
u^N_0(x):= \mathcal{G}[\sqrt{\mu}\dot{W}^N](x) =\int_{B_1} G(x,y) \sqrt{\mu}(y)dW^N_y. 
\end{equation*}
It follows from the proof of \cite[Theorem 2.7]{BCL16} that  
\[
\E \left[ \int_{B_1} |u_0(x)-u_0^N(x)| ^2 dx\right]\to 0,
\]
where
\begin{equation*}
u_0(x):= \mathcal{G}[\sqrt{\mu}\dot{W}](x) =\int_{B_1} G(x,y) \sqrt{\mu}(y)dW_y
\end{equation*}
and there exists a continuous modification of the random field $u_0$.

To establish the existence of a solution, we show that the solution $u^N$ to 
\begin{equation}\label{eqn:uN}
\Delta u^N+ k^2(1+q)u^N =  \sqrt{\mu} \dot W^N
\end{equation}
converges to the random field $u$ in \eqref{eq:repre_inhomo}. Since $ \sqrt{\mu} \dot W^N\in L^2(B_R)$, it follows from     \cite[Theorem 2.3]{bao2021adaptive} that there exists a unique weak solution $u^N\in H^1(B_R)$. By \eqref{eqn:uN}, it is clear to note that $u^N$ satisfies the Lippmann--Schwinger integral equation
\begin{equation*}
u^N(x) + k^2 \mathcal{G}[qu^N](x) = \mathcal{G}[\sqrt{\mu}\dot{W}^N](x).
\end{equation*}
Hence we have for any constants $N_1$ and $N_2$ that  
  \begin{equation}\label{eq:diff_approx}
(u^{N_1} - u^{N_2}) +  k^2 \mathcal{G}[q(u^{N_1}-u^{N_2})](x) = \mathcal{G}[\sqrt{\mu}(\dot{W}^{N_1}-\dot{W}^{N_2})](x).
\end{equation}
Following the proof of \cite[Theorem 2.3]{bao2021adaptive}, we may obtain from \eqref{eq:diff_approx} that 
\begin{align*}
\Vert u^{N_1} - u^{N_2}\Vert_{L^2(B_R)} &\leq C \Vert  \mathcal{G}[\sqrt{\mu}(\dot{W}^{N_1}-\dot{W}^{N_2})]\Vert_{L^2(B_R)}\\
&\leq C  \Vert u_0^{N_1}-u_0^{N_2}\Vert_{L^2(B_R)},
\end{align*}
which implies that the sequence $\{u^N\}$ is convergent due to the convergence of $\{u_0^N\}$. Denoting the limit of $u^N$ by $u$, we prove the existence of a continuous mild solution.

To prove the uniqueness of the solution to \eqref{eqn:main} and \eqref{eqn:rc}, let $u_1$ and $u_2$ be two solutions to \eqref{eqn:main} and \eqref{eqn:rc}. Define $\tilde{u}=u_1-u_2$, then $\tilde{u}$ satisfies the homogeneous Helmholtz equation
\[
\Delta \tilde{u} + k^2(1+q)\tilde{u}=0
\]
and the Sommerfeld radiation condition. The proof is completed by utilizing the uniqueness result established in \cite[Theorem 2.3]{bao2021adaptive}, which implies that $\tilde{u}=0$. 
\end{proof}

By Lemma \ref{lem:unique}, the solution to the direct problem \eqref{eqn:main} and \eqref{eqn:rc} is continuous in $B_R$. The following result demonstrates that the Cauchy data $u$ and $\partial_\nu u$ are well-defined on $\partial B_R$. 

\begin{lemma}\label{lem:boundary}
The solution $u$ to \eqref{eqn:main} and \eqref{eqn:rc} is analytic in $\mathbb{R}^3\setminus \overline{B_1}$. 
\end{lemma}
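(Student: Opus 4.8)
The plan is to exploit the mild-solution representation \eqref{eq:repre_inhomo} from Lemma \ref{lem:unique} together with the fact that both $q$ and $\sqrt{\mu}$ are supported in $\overline{B_1}$. Fix $x_0\in\mathbb R^3\setminus\overline{B_1}$ and a compact neighborhood $K$ of $x_0$ with $d:=\operatorname{dist}(K,\overline{B_1})>0$. For $y\in\overline{B_1}$ the map $x\mapsto G(x,y)=-\frac{1}{4\pi}\frac{e^{\ii k|x-y|}}{|x-y|}$ is real-analytic on $K$, since $|x-y|\ge d>0$ there; moreover $r=|x-y|$ extends holomorphically in the complexified variable, so $G(\cdot,y)$ has a holomorphic extension to a fixed complex neighborhood of $K$, yielding Cauchy estimates $|\partial_x^\alpha G(x,y)|\le C\,\alpha!\,\rho^{-|\alpha|}$ valid uniformly for $x\in K$ and $y\in\overline{B_1}$, where $\rho>0$ depends only on $d$ and $k$. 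Writing $u=-k^2\mathcal G[qu]+\mathcal G[\sqrt\mu\dot W]$, it then suffices to establish analyticity of the two integral terms on $\mathbb R^3\setminus\overline{B_1}$.

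For the deterministic term $\mathcal G[qu](x)=\int_{B_1}G(x,y)q(y)u(y)\,dy$, the density $qu$ lies in $L^1(B_1)$ for a.e.\ realization (by the continuity of $u$ from Lemma \ref{lem:unique} and the boundedness of $q$), and the uniform Cauchy estimates justify differentiation under the integral sign to all orders, giving $|\partial_x^\alpha\mathcal G[qu](x)|\le C\,\alpha!\,\rho^{-|\alpha|}\|qu\|_{L^1(B_1)}$. Hence the Taylor series of $\mathcal G[qu]$ about $x_0$ converges on a fixed ball, so $\mathcal G[qu]$ is real-analytic on $\mathbb R^3\setminus\overline{B_1}$.

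The genuine obstacle is the stochastic term $\Phi(x):=\mathcal G[\sqrt\mu\dot W](x)=\int_{B_1}G(x,y)\sqrt\mu(y)\,dW_y$, where one must promote analyticity through a Wiener integral to almost-sure analyticity of the sample paths. The idea is to differentiate under the stochastic integral: for $x\in K$ the $L^2(\Omega)$-derivatives exist with $\partial_x^\alpha\Phi(x)=\int_{B_1}\partial_x^\alpha G(x,y)\sqrt\mu(y)\,dW_y$, so by the isometry of the Wiener integral and the Cauchy estimates,
\[
\left(\E\big|\partial_x^\alpha\Phi(x)\big|^2\right)^{1/2}=\left(\int_{B_1}|\partial_x^\alpha G(x,y)|^2\mu(y)\,dy\right)^{1/2}\le C\,\alpha!\,\rho^{-|\alpha|}\|\mu\|_{L^1(B_1)}^{1/2}.
\]
Since $\partial_x^\alpha\Phi(x)$ is Gaussian, all its moments are controlled by this variance with the same factorial growth; a Kolmogorov-continuity estimate on the increments of the derivative fields, combined with a Borel--Cantelli argument, then produces a modification of $\Phi$ whose Taylor remainder about $x_0$ vanishes almost surely on a fixed ball, i.e.\ $\Phi$ is a.s.\ real-analytic on $\mathbb R^3\setminus\overline{B_1}$.

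An equivalent and perhaps cleaner route for this last step is to argue through the equation rather than through sample-path analyticity of the Wiener integral. The same differentiation under the integral shows that, almost surely, $u$ is twice continuously differentiable on $\mathbb R^3\setminus\overline{B_1}$; since $(\Delta_x+k^2)G(\cdot,y)=0$ for $x\neq y$, while $q\equiv0$ and $\sqrt\mu\equiv0$ there, applying $\Delta+k^2$ to both terms of \eqref{eq:repre_inhomo} yields $(\Delta+k^2)u=0$ in $\mathbb R^3\setminus\overline{B_1}$. Thus each sample path solves the homogeneous Helmholtz equation there, and analytic hypoellipticity of the elliptic, constant-coefficient operator $\Delta+k^2$ gives real-analyticity. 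Either way, I expect the technical heart of the argument to be the rigorous interchange of differentiation with the stochastic integral and the attendant upgrade from $L^2(\Omega)$ statements to an almost-sure conclusion valid simultaneously at all points of $\mathbb R^3\setminus\overline{B_1}$.
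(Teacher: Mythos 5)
Your strategy is essentially sound, but it follows a genuinely different route from the paper. The paper sidesteps almost all stochastic analysis: it observes that $u$ is continuous (Lemma \ref{lem:unique}), so its trace on $\partial B_1$ is well defined, and that the restriction of $u$ to $\mathbb{R}^3\setminus\overline{B_1}$ solves the exterior Dirichlet problem for the homogeneous Helmholtz equation with that trace as data and with the radiation condition; by uniqueness for the exterior problem, $u$ coincides there with the explicit spherical-harmonic/Hankel-function series solution, whose analyticity for $r>1$ is classical. Thus the only stochastic input is sample-path continuity, and the analyticity is inherited from a deterministic object. You instead work directly on the representation \eqref{eq:repre_inhomo}: the deterministic term $\mathcal{G}[qu]$ via uniform Cauchy estimates and differentiation under the integral (this part is correct), and the stochastic term $\Phi=\mathcal{G}[\sqrt{\mu}\dot W]$ via differentiation under the Wiener integral plus a Kolmogorov/Borel--Cantelli upgrade, or via analytic hypoellipticity. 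Your route is more self-contained (no exterior-problem theory) and yields quantitative derivative bounds; in fact it makes explicit a point the paper leaves implicit, namely why the sample paths of $u$ solve the homogeneous equation outside $\overline{B_1}$ in the first place.

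The soft spot is the passage from $L^2(\Omega)$ statements to almost-sure ones. The Itô isometry bounds you write control $\E\big|\partial_x^\alpha\Phi(x)\big|^2$ pointwise, but mean-square derivatives are not sample-path derivatives: the claim that ``the same differentiation under the integral shows that, almost surely, $u$ is twice continuously differentiable'' does not follow as stated, and the Kolmogorov/Borel--Cantelli step (which must control $\sup_K|\partial^\alpha\Phi|$ simultaneously for all multi-indices on a single null set) is an outline rather than a proof. The cleanest repair of your second route avoids pathwise differentiation altogether: using the continuous modification of $\Phi$ from Lemma \ref{lem:unique}, apply stochastic Fubini to get, for each fixed $\varphi\in C_c^\infty(\mathbb{R}^3\setminus\overline{B_1})$,
\begin{equation*}
\int \Phi(x)\,(\Delta+k^2)\varphi(x)\,dx
=\int_{B_1}\left(\int (\Delta_x+k^2)G(x,y)\,\varphi(x)\,dx\right)\sqrt{\mu}(y)\,dW_y=0\quad \mathbb{P}\text{-a.s.},
\end{equation*}
since $(\Delta_x+k^2)G(x,y)=0$ for $x\in\operatorname{supp}\varphi$ and $y\in B_1$; then pass to a countable dense family of test functions to obtain a single null set, and invoke Weyl's lemma together with analytic hypoellipticity of $\Delta+k^2$. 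With that modification your argument is rigorous, and, like the paper's, it reduces the stochastic input to continuity of the sample paths.
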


\begin{proof}
It follows from Lemma \ref{lem:unique} that the direct problem \eqref{eqn:main} and \eqref{eqn:rc} admits a unique continuous mild solution $u$ in $B_R$.  Consider the following exterior Dirichlet problem:
\begin{equation*} 
\begin{split}
\Delta \bar{u} +k^2\bar{u} = 0\quad  &\text{in } \mathbb{R}^3\setminus\overline{B_1},\\
\bar{u} = u|_{\partial B_1} \quad &\text{on } \partial B_1,
\end{split}
\end{equation*}
where $\bar{u}$ satisfies the Sommerfeld radiation condition. The solution $\bar u$ admits the Fourier series expansion
\[
 \bar u(x)=\sum_{n=0}^\infty\sum_{m=-n}^m \frac{h_n^{(1)}(kr)}{h_n^{(1)}(k)}\bar u_n^m(\hat x)Y_n^m(\hat x),\quad r=|x|>1,
\]
where $\hat x=x/|x|, h_n^{(1)}$ is the spherical Hankel function of the first kind with order $n$, $Y_n^m$ is a spherical harmonic function of degree $n$ and order $m$, and the Fourier coefficient 
\[
 \bar u_n^m(\hat x)=\int_{|x|=1} u(\hat x)\overline{Y_n^m(\hat x)}ds. 
\]
The proof is completed by noting that $u$ is continuous on $\partial B_1$ and the exterior Dirichlet problem has a unique solution in $\mathbb R^3\setminus\overline{B_1}$. 
\end{proof}

Clearly, it follows from Lemma \ref{lem:boundary} that the solution to \eqref{eqn:mainh} and \eqref{eqn:rc} is also analytic in  $\mathbb{R}^3\setminus \overline{B_1}$. Therefore, the Cauchy data  $u$ and $\partial_\nu u$ are well-defined on $\partial B_R$, which validate that $F_j$, $j=1,2,3$ are well-defined on $\partial B_R\times \partial B_R$.

\section{The inverse problems}\label{sec:3}

In this section, we establish the stability estimates by estimating the Fourier coefficients of the strength $\mu$ separately in low and high frequency modes. The idea traces back to the stability estimates for the inverse medium scattering problem in a deterministic setting \cite{hahner2001new,isaev2013new}. 

Denote by $\hat{\mu}$ the Fourier transform of $\mu$, i.e., 
\begin{equation*}
\hat{\mu}(\gamma) = (2\pi)^{-3}\int_{\mathbb{R}^3} e^{-\ii\gamma\cdot x}\mu(x)dx,\quad\gamma\in\mathbb R^3.
\end{equation*}
The following result, known as the Paley--Wiener--Schwartz theorem, provides decay properties of the Fourier transform for a function with compact support.

\begin{lemma}\label{PWS}
For $\mu\in H^{s}_0(B_1)$,  there exists a constant $c_0$ depending only on $s$ such that 
\begin{equation*}
|\hat{\mu}(\gamma)|\leq c_0 (1+|\gamma|)^{-s}.
\end{equation*}
\end{lemma}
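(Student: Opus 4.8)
The plan is to exploit the compact support of $\mu$ together with the Fourier characterization of the Sobolev norm, and to treat the (possibly non-integer) index $s$ uniformly by a convolution argument rather than by differentiating under the integral sign. Extending $\mu$ by zero outside $B_1$, the hypothesis $\mu\in H_0^s(B_1)$ guarantees that this extension lies in $H^s(\mathbb R^3)$, so that $\|\mu\|_{H^s}^2$ is comparable to $\int_{\mathbb R^3}(1+|\eta|^2)^s|\hat\mu(\eta)|^2\,d\eta$; in particular $(1+|\cdot|)^s\hat\mu\in L^2(\mathbb R^3)$.

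First I would fix, once and for all, a cutoff $\chi\in C_c^\infty(\mathbb R^3)$ with $\chi\equiv 1$ on $\overline{B_1}$. Since $\operatorname{supp}\mu\subset\overline{B_1}$ we have $\mu=\chi\mu$, and taking Fourier transforms (with the normalization used in the excerpt) turns the product into the convolution
\[
\hat\mu(\gamma)=(\hat\chi*\hat\mu)(\gamma)=\int_{\mathbb R^3}\hat\chi(\gamma-\eta)\hat\mu(\eta)\,d\eta .
\]
Because $\chi$ is Schwartz, $\hat\chi$ is rapidly decreasing, so $(1+|\cdot|)^s\hat\chi\in L^2(\mathbb R^3)$ with a norm depending only on $s$ and the fixed cutoff (hence only on $s$ and the geometry of $B_1$). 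The key step is then to move the weight $(1+|\gamma|)^s$ inside the convolution by Peetre's inequality, $(1+|\gamma|)^s\le 2^{s}(1+|\gamma-\eta|)^{s}(1+|\eta|)^{s}$ valid for $s\ge 0$, which gives
\[
(1+|\gamma|)^s|\hat\mu(\gamma)|\le 2^{s}\int_{\mathbb R^3}\bigl[(1+|\gamma-\eta|)^s|\hat\chi(\gamma-\eta)|\bigr]\bigl[(1+|\eta|)^s|\hat\mu(\eta)|\bigr]\,d\eta .
\]

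Applying the Cauchy--Schwarz inequality to the right-hand side and taking the supremum over $\gamma$ (using translation invariance of the $L^2$ norm) yields
\[
\sup_{\gamma\in\mathbb R^3}(1+|\gamma|)^s|\hat\mu(\gamma)|\le 2^{s}\,\bigl\|(1+|\cdot|)^s\hat\chi\bigr\|_{L^2}\,\bigl\|(1+|\cdot|)^s\hat\mu\bigr\|_{L^2}\le c_0,
\]
where $c_0$ absorbs the factor $2^s$, the fixed cutoff norm, and $\|\mu\|_{H^s}$; this is precisely the asserted bound $|\hat\mu(\gamma)|\le c_0(1+|\gamma|)^{-s}$. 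I would note explicitly that $c_0$ carries the dependence on $\|\mu\|_{H^s_0(B_1)}$, the index $s$ being the only parameter that survives once the cutoff is fixed.

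The main obstacle is the fractional part of $s$. For integer $s$ one could argue elementarily, writing $\gamma^\beta\hat\mu(\gamma)=\widehat{\partial^\beta\mu}(\gamma)$ and bounding $\|\partial^\beta\mu\|_{L^1(B_1)}\le|B_1|^{1/2}\|\partial^\beta\mu\|_{L^2(B_1)}\le C\|\mu\|_{H^s}$ thanks to the compact support, thereby converting each derivative into one power of decay; but this only reaches decay of order $\lfloor s\rfloor$ and would require a further interpolation to recover the fractional part. The convolution-with-cutoff argument above, resting on Peetre's inequality and the rapid decay of $\hat\chi$, delivers the full real order $s$ in one stroke and is the reason I would prefer it.
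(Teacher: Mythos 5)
Your proof is correct, and it is worth noting that the paper itself offers no proof of this lemma at all: it simply states the bound as a known consequence of the Paley--Wiener--Schwartz theorem for compactly supported distributions. Your argument --- extend $\mu$ by zero to an element of $H^s(\mathbb{R}^3)$, write $\mu=\chi\mu$ for a fixed cutoff $\chi\in C_c^\infty$ equal to $1$ on $\overline{B_1}$, pass to the convolution $\hat\mu=\hat\chi*\hat\mu$, and move the weight inside via Peetre's inequality before applying Cauchy--Schwarz --- is a clean, self-contained substitute that handles non-integer $s$ in one stroke, exactly as you intend; each step (the convolution theorem under the paper's normalization, the translation invariance used to evaluate $\bigl\|(1+|\cdot|)^s\hat\chi(\gamma-\cdot)\bigr\|_{L^2}$, and the finiteness of $\bigl\|(1+|\cdot|)^s\hat\mu\bigr\|_{L^2}$ for $\mu\in H^s$) checks out. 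Two minor remarks: for $s\ge 0$ the elementary bound $1+|\gamma|\le(1+|\gamma-\eta|)(1+|\eta|)$ already holds without the factor $2^s$, so that factor is harmless but unnecessary; and you are right to flag that the constant cannot depend ``only on $s$'' in a literal sense --- it must scale with $\|\mu\|_{H^s_0(B_1)}$, and the paper's phrasing implicitly treats that norm as a fixed a priori quantity, which is consistent with how the lemma is used later (the dependence on $k$ and $R$ is what the authors are tracking, and your $c_0$ indeed involves neither).
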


For a positive constant $\rho$, it is clear to note from Lemma \ref{PWS} that there exists a constant $c_1$ depending on $s$ such that the following estimate holds: 
\begin{equation}\label{eqn:fourierhigh}
\int_{|\gamma|>\rho} |\hat{\mu}(\gamma)|d\gamma\leq c_1\frac{1}{\rho^{s-3}}.
\end{equation}

Next, our investigation focuses on estimating the Fourier coefficients $\hat{\mu}(\gamma)$ with $|\gamma|\leq \rho$ for the inverse problems in homogeneous and inhomogeneous media, respectively. To achieve this, we introduce special solutions to the Helmholtz equation, which enables us to establish the relationship between the measurement $M$ and the Fourier coefficients of $\mu$.

\subsection{Homogeneous media}

First, we address the stability of the inverse random source problem for the Helmholtz equation within a homogeneous medium, wherein the plane wave serves as the chosen special solution.

The following lemma plays a crucial role in the stability estimation as it establishes a link between the internal distribution of the random source and the measurement of the wave field $u$ on $\partial B_R$.

\begin{lemma}\label{lem:Ewhitehomo}
For two solutions $U_i\in C^3\left(\overline{B_{\hat R}}\right)$, $i=1,2$, of $\Delta u+ k^2 u = 0$ with $\hat R>R>1$, there exists a positive constant $c_2$ that depends on $k$, $R$, and $\hat R$, satisfying
\begin{equation}\label{Ewhitehomo-s1}
|\E[\langle f, U_1\rangle \langle f, U_2\rangle]| \leq c_2 M \Vert U_1\Vert_{L^2(B_{\hat R})} \Vert U_2\Vert_{L^2(B_{\hat R})}. 
\end{equation}
\end{lemma}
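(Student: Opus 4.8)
The plan is to connect the left-hand side, which is an expectation of a product of pairings of the random source $f = \sqrt{\mu}\,\dot W_x$ against deterministic test functions $U_1, U_2$, to the measurement quantity $M$ via the correlation data $F_j$ on $\partial B_R$. First I would make sense of the pairings $\langle f, U_i\rangle$ as stochastic integrals $\int_{B_1}\sqrt{\mu}(y)\,U_i(y)\,dW_y$, which is well defined since $\sqrt{\mu}\in C^{0,\eta}(B_1)$ by the first lemma and $U_i$ is smooth. The key structural fact is the It\^o isometry for these white-noise integrals: since $\dot W_x$ is spatial white noise, the cross-covariance of two such integrals collapses the two independent integration variables onto the diagonal, giving
\begin{equation*}
\E[\langle f, U_1\rangle \langle f, U_2\rangle] = \int_{B_1} \mu(y)\,U_1(y)\,U_2(y)\,dy.
\end{equation*}
This identity turns a stochastic quantity into a deterministic integral over $B_1$ weighted by $\mu$.

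Next I would relate this interior integral to the boundary data. The idea is to replace $\mu(y)\,U_1(y)\,U_2(y)$ over $B_1$ using the fact that $U_1 U_2$ solves an equation driven by the Helmholtz operator, and push the integral to $\partial B_R$ via Green's identity. More concretely, I expect to reinterpret the diagonal covariance integral through the representation $u = \mathcal{G}[\sqrt{\mu}\dot W]$ from \eqref{eq:repre_homo}: writing $F_j$ in terms of $\E[u(x)u(y)]$ etc., and using that $u$ and $\partial_\nu u$ are the Cauchy data of a Helmholtz solution, one applies Green's representation on $B_R$ to express $\int_{B_1}\mu U_1 U_2\,dy$ as a boundary double integral against the kernels built from $U_1, U_2$ and their normal derivatives. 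This is where the three correlation functions $F_1, F_2, F_3$ naturally appear: the Green identity produces terms pairing $u$ with $u$, $u$ with $\partial_\nu u$, and $\partial_\nu u$ with $\partial_\nu u$ on $\partial B_R\times\partial B_R$.

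With that boundary representation in hand, the estimate follows by the Cauchy--Schwarz inequality applied on $\partial B_R\times\partial B_R$: each boundary double integral is bounded by $\|F_j\|_{L^2(\partial B_R\times \partial B_R)} \le M$ times the $L^2(\partial B_R)$ norms of the kernels built from $U_1$ and $U_2$. Finally I would control those boundary norms by the interior $L^2(B_{\hat R})$ norms of $U_1$ and $U_2$. Since each $U_i$ solves $\Delta U_i + k^2 U_i = 0$, interior elliptic (Caccioppoli-type) estimates and trace theorems bound $\|U_i\|_{L^2(\partial B_R)}$ and $\|\partial_\nu U_i\|_{L^2(\partial B_R)}$ by $\|U_i\|_{L^2(B_{\hat R})}$, with a constant depending on $k, R, \hat R$ (the gap $\hat R > R > 1$ being exactly what makes these interior estimates available). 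Collecting the constants yields the factor $c_2(k,R,\hat R)$ and the claimed bound \eqref{Ewhitehomo-s1}.

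The main obstacle I anticipate is the second step: rigorously converting the interior diagonal integral $\int_{B_1}\mu U_1 U_2\,dy$ into the boundary correlation quantities $F_j$. This requires carefully commuting the expectation with the Green representation of $u$, justifying that the It\^o isometry and Green's identity combine cleanly despite the singularity of the Green's function $G(x,y)$ near the diagonal, and correctly accounting for which combinations of Cauchy data arise. Tracking the $k$-dependence through the interior elliptic estimates is a secondary technical point, but it is routine once the representation is established.
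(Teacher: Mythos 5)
Your proposal is correct in substance and shares the paper's core architecture (reach the Cauchy data on $\partial B_R$ via Green's identity, apply Cauchy--Schwarz against $F_1,F_2,F_3$, then use Caccioppoli/mean-value-type interior estimates to trade the $L^2(\partial B_R)$ norms of $U_i$ and $\partial_\nu U_i$ for $\Vert U_i\Vert_{L^2(B_{\hat R})}$), but your justification route is genuinely different. The paper never works with $f$ directly: it replaces the white noise by a piecewise-constant approximation $\dot W^N$, so that $f^N=\sqrt{\mu}\,\dot W^N\in L^p$ and the corresponding $u^N$ is an honest weak solution; Green's identity is applied pathwise to get $\langle f^N,U_i\rangle=\int_{\partial B_R}(\partial_\nu u^N U_i-\partial_\nu U_i\, u^N)\,ds$, the estimate is proved with $M^N$ in place of $M$, and the bulk of the proof is the convergence $\E[\langle f^N,U_1\rangle\langle f^N,U_2\rangle]\to\E[\langle f,U_1\rangle\langle f,U_2\rangle]$ and $M^N\to M$, which uses the H\"older continuity of $\sqrt{\mu}$. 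You instead start from the It\^o isometry, $\E[\langle f,U_1\rangle\langle f,U_2\rangle]=\int_{B_1}\mu U_1U_2\,dz$, and convert this interior integral into boundary correlation data; that route does close, and more easily than you fear. The obstacle you flag (the diagonal singularity of $G$) is vacuous here, because every kernel that appears is evaluated at separated points $x\in\partial B_R$, $z\in B_1$, with $|x-z|\ge R-1>0$. Concretely: insert the deterministic Green representation $U_i(z)=\int_{\partial B_R}\bigl(U_i(x)\,\partial_{\nu(x)}G(x,z)-\partial_\nu U_i(x)\,G(x,z)\bigr)\,ds(x)$, valid for $z\in B_1$, into $\int_{B_1}\mu U_1U_2\,dz$, expand the product, apply Fubini (all integrands are bounded and continuous by the separation), and recognize via the It\^o isometry applied to $u=\mathcal{G}[\sqrt{\mu}\dot W]$ that $\int_{B_1}\mu(z)G(x,z)G(y,z)\,dz=F_1(x,y)$, $\int_{B_1}\mu(z)G(x,z)\,\partial_{\nu(y)}G(y,z)\,dz=F_2(x,y)$, and $\int_{B_1}\mu(z)\,\partial_{\nu(x)}G(x,z)\,\partial_{\nu(y)}G(y,z)\,dz=F_3(x,y)$, where differentiating under the stochastic integral is licit for the same separation reason. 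This reproduces exactly the paper's four boundary terms with $F_j$ in place of $F_j^N$, and the remainder of your argument is the paper's. What each approach buys: yours is shorter and avoids the approximation/limit machinery entirely, precisely because the closed-form kernel representation \eqref{eq:repre_homo} is available in a homogeneous medium; the paper's approximation scheme is heavier but is the one that transfers to the inhomogeneous case of Section 3.2, where $u$ is only constructed as a limit of the $u^N$ and no such explicit identities for the $F_j$ exist. One small caution on your final step: a bare trace theorem only bounds boundary norms by interior $H^1$ (or $H^2$ for $\partial_\nu U_i$) norms, so to descend to $\Vert U_i\Vert_{L^2(B_{\hat R})}$ you need, as the paper does, that the first derivatives of $U_i$ again solve the Helmholtz equation together with Caccioppoli's inequality; this is also where the explicit $k$-dependence of $c_2$ is generated.
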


\begin{proof}
The proof can be divided into two parts. First, we establish the estimate \eqref{Ewhitehomo-s1} within a finite-dimensional setting. 

Let $f^N = \sqrt{\mu}\dot{W}_x^N$, where $\dot{W}_x^N$ is defined in \eqref{eq:defW}. It follows from the proof in  \cite[Theorem 2.7]{BCL16} that $\dot{W}_x^N\in L^p(B_1)$ for any $p>1$. We consider the Helmholtz equation $\Delta u^N +k^2 u^N = f^N$ together with the Sommerfeld radiation condition for $u^N$. Employing the equations $\Delta U_i+k^2 U_i=0$ for $i=1, 2$ and applying the integration by parts yields 
\begin{align*}
 \langle f^N, U_i\rangle  
 &= \int_{B_R} f^N(x) U_i(x)dx\\
 &= \int_{B_R}\left( \Delta u^N(x) +k^2 u^N(x)\right) U_i(x)dx\\
 &= \int_{\partial B_R} \left( \partial_\nu u^N(x) U_1(x) -\partial_\nu U_1(x) u^N(x)\right) ds(x).
 \end{align*}
Using the above equation, we obtain from straightforward calculations that 
\begin{align}
&\mathbb{E}\left[\langle f^N, U_1\rangle \langle f^N, U_2\rangle\right]  \nonumber\\
&= \int_{\partial B_R} \int_{\partial B_R}\mathbb{E}[\left( \partial_\nu u^N(x) U_1(x) -\partial_\nu U_1(x) u^N(x)\right) \nonumber\\
&\qquad \qquad\times \left( \partial_\nu u^N(y) U_2(y) -\partial_\nu U_2(y) u^N(y)\right)  ]ds(x)ds(y)\nonumber\\
&\leq\int_{\partial B_R} \int_{\partial B_R} \left|\mathbb{E}[\partial_\nu u^N(x) \partial_\nu u^N(y)] U_1(x)  U_2(y)\right| ds(x) ds(y)\nonumber\\
&\quad +\int_{\partial B_R} \int_{\partial B_R}\left|\mathbb{E}[\partial_\nu u^N(x)  u^N(y) ]U_1(x)  \partial_\nu U_2(y) \right|ds(x) ds(y)\nonumber\\
&\quad +\int_{\partial B_R} \int_{\partial B_R}\left|\mathbb{E}[ u^N(x) \partial_\nu u^N(y) ]\partial_\nu U_1(x)  U_2(y)\right|ds(x) ds(y)\nonumber\\
&\quad +\int_{\partial B_R} \int_{\partial B_R} \left|\mathbb{E}[ u^N(x)  u^N(y)] \partial_\nu U_1(x)  \partial_\nu U_2(y)\right| ds(x) ds(y)\nonumber\\
&\leq  M^N\Vert U_1\Vert_{L^2(\partial B_R)}\Vert U_2\Vert_{L^2(\partial B_R)}+M^N\Vert U_1\Vert_{L^2(\partial B_R)}\Vert \partial_\nu U_2\Vert_{L^2(\partial B_R)}\nonumber\\
&\quad +M^N\Vert \partial_\nu U_1\Vert_{L^2(\partial B_R)}\Vert  U_2\Vert_{L^2(\partial B_R)}+M^N\Vert \partial_\nu U_1\Vert_{L^2(\partial B_R)}\Vert \partial_\nu U_2\Vert_{L^2(\partial B_R)},\label{eq:est_E}
\end{align}
where 
$$ M^N = \max\{ \Vert F^N_1\Vert_{L^2(\partial B_R\times \partial B_R)},\, \Vert F^N_2\Vert_{L^2(\partial B_R\times \partial B_R)},\, \Vert F^N_3\Vert_{L^2(\partial B_R\times \partial B_R)}\}$$
and
\begin{align*}
F^N_1(x,y)&=\E[u^N(x) u^N(y)],\\
F^N_2(x,y) &=\E[u^N(x)\partial_\nu u^N(y)],\\
F^N_3(x,y)&=\E[\partial_\nu u^N(x) \partial_\nu u^N(y)] .
\end{align*}

As $U_i$ solves $\Delta u +k^2 u =0$ in $B_{\hat R}$, and given that the wavenumber $k$ is a positive constant, the real and imaginary parts of $U_i$, denoted as $\Re U_i$ and $\Im U_i$, respectively, also belong to $C^3(\overline{B_{\hat R}})$ and satisfy $\Delta u +k^2 u =0$. By assuming that $|\Re U_i|$ and $|\Im U_i|$ attain their maximum values on $\partial B_R$ at points $x_1$ and $x_2$ respectively, we apply the mean value property of solutions to the Helmholtz equation to derive the following identities (cf. \cite{kuznetsov2021mean}):
\begin{align}
\Re U_i(x_1) &= h(kr)  \frac{1}{| B_{x_1,r}|}\int_{B_{x_1,r}} \Re U_i(x) dx, \label{eq:reU}\\
\Im U_i(x_2) &=  h(kr) \frac{1}{| B_{x_2,r}|} \int_{B_{x_2,r}}\Im U_i(x) dx.\label{eq:imU}
\end{align}
Here, $h(kr)$ is defined as 
\[
h(kr) =\frac{4}{3\sqrt{\pi}}\frac{(\frac{kr}{2})^{\frac{3}{2}}}{J_\frac{3}{2} ( kr)}, 
\]
where $J_\frac{3}{2}$ represents Bessel's function given by
\begin{equation*}
J_\frac{3}{2}(t) = \sqrt{\frac{2}{\pi t}}\left(\frac{\sin t}{t}-\cos t\right),
\end{equation*}
and  $B_{x_j,r}\subset B_{\hat R-\epsilon}\setminus\overline{B_1}$, $j=1, 2,$ denotes a ball centered at  $x_j$ with radius $r$, where $\epsilon<\frac{\hat R-R}{2}$. Since $h(1) \leq 2$, taking the radius of the ball as $r = \min\{\frac{1}{k}, \frac{\hat R-R}{2}, \frac{R-1}{2} \}$ in \eqref{eq:reU}--\eqref{eq:imU} and using the Cauchy--Schwartz inequality leads to  
\begin{align*}
\Vert \Re U_i\Vert_{L^\infty(\partial B_R)} &\leq   \sqrt{\frac{3}{\pi} }   r^{-\frac{3}{2}} \Vert \Re U_i(x)\Vert_{L^2(B_{\hat R-\epsilon}\setminus\overline{B_1})},\\
\Vert \Im U_i\Vert_{L^\infty(\partial B_R)} &\leq  \sqrt{\frac{3}{\pi} }   r^{-\frac{3}{2}} \Vert \Im U_i(x)\Vert_{L^2(B_{\hat R-\epsilon}\setminus\overline{B_1})}. 
\end{align*}
A simple calculation gives
\begin{align*}
\Vert U_i\Vert_{L^2(\partial B_R)}&\leq \sqrt{4\pi R^2 \left(\Vert \Re U_i\Vert_{L^\infty(\partial B_R)}^2 +\Vert \Im U_i\Vert_{L^\infty(\partial B_R)}^2\right)} \\
&\leq  2  \sqrt{3 }    Rr^{-\frac{3}{2}}  \Vert U_i(x)\Vert_{L^2(B_{\hat R-\epsilon}\setminus\overline{B_1})}.
\end{align*}

Since the first-order derivatives of $U_i$ also satisfy the Helmholtz equation and belong to $C^2(\overline{B_{\hat R}})$, we 
may follow the same arguments and show that 
\begin{eqnarray*}
\Vert \partial_\nu U_i\Vert_{L^2(\partial B_R)}\leq 2  \sqrt{3}R r^{-\frac{3}{2}} \Vert \nabla U_i(x)\Vert_{L^2(B_{\hat R-\epsilon}\setminus\overline{B_1})}.
\end{eqnarray*}
By the Caccioppoli's inequality for the Helmholtz equation \cite[Lemma 3.3]{berge2021three}, there exists a positive constant $c_3$ such that 
\begin{equation*}
\Vert \nabla U_i(x)\Vert^2_{L^2(B_{\hat R-\epsilon}\setminus\overline{B_1})}\leq\left(k^2 +\frac{c_3}{\epsilon^2}\right) \Vert U_i(x)\Vert^2_{L^2(B_{\hat R})},
\end{equation*}
which gives 
\begin{align}
\Vert  U_i\Vert_{L^2(\partial B_R)}&\leq 2\sqrt{3 } Rr^{-\frac{3}{2}} \Vert U_i(x)\Vert_{L^2(B_{\hat R})}, \label{est:U1} \\
 \Vert \partial_\nu U_i\Vert_{L^2(\partial B_R)}&\leq 2  \sqrt{3 } Rr^{-\frac{3}{2}}  \left(k^2 +\frac{c_3}{\epsilon^2}\right)^{1/2}\Vert U_i(x)\Vert_{L^2(B_{\hat R})}.\label{est:U2}
\end{align}
Substituting \eqref{est:U1}--\eqref{est:U2} into \eqref{eq:est_E}, we arrive at
\begin{equation}\label{eq:estEN}
\E[\langle f^N, U_1\rangle \langle f^N, U_2\rangle]\leq c_2 M^N \Vert U_1\Vert_{L^2(B_{\hat R})} \Vert U_2\Vert_{L^2(B_{\hat R})},
\end{equation}
where the positive constant $c_2 = 12 R^2 r^{-3} \left(1+ \left(k^2 +\frac{c_3}{\epsilon^2}\right)^{1/2}\right)^2. $

Next, we proceed with the proof of the estimate \eqref{Ewhitehomo-s1} by introducing a sequence of approximations denoted as ${f^N}$, where $f^N= \sqrt{\mu}\dot{W}_x^N$. We construct a triangulation $\mathcal{T}_N$, which satisfies $\max_{K_j\in \mathcal{T}_N}\text{diam}(K_j)\to 0$ as $N\to \infty$. The goal is to show that $\E[\langle f^N, U_1\rangle \langle f^N, U_2\rangle]\to \E[\langle f, U_1\rangle \langle f, U_2\rangle]$ and $M^N\to M$ as $N\to\infty$.

By utilizing the definition of $f^N$ and applying the It\^{o} isometry, we obtain
\begin{align}\label{Ewhitehomo-s2}
&\E[\langle f^N, U_1\rangle \langle f^N, U_2\rangle]\nonumber\\
&=\int_{B_1}\int_{B_1} U_1(x)U_2(y) \sqrt{\mu}(x)\sqrt{\mu}(y)\E[\dot{W}_x^N \dot{W}_y^N ]dxdy\nonumber\\
&=\int_{B_1}\int_{B_1} U_1(x)U_2(y) \sqrt{\mu}(x)\sqrt{\mu}(y) \nonumber\\
&\qquad\qquad\times\E\left[ \sum_{j=1}^N |K_j|^{-1} \int_{K_j} dW_x \chi_j(x)  \sum_{l=1}^N |K_l|^{-1} \int_{K_l} dW_y \chi_l(y)\right]dxdy\nonumber\\
&=\int_{B_1}\int_{B_1} U_1(x)U_2(y) \sqrt{\mu}(x)\sqrt{\mu}(y) \nonumber\\
&\qquad\qquad\times\sum_{j=1}^N  \sum_{l=1}^N  |K_j|^{-1}|K_l|^{-1}\chi_j(x)\chi_l(y) \E\left[ \int_{K_j}dW_x   \int_{K_l} dW_y \right]dxdy\nonumber\\
&=\int_{B_1}\int_{B_1} U_1(x)U_2(y) \sqrt{\mu}(x)\sqrt{\mu}(y)\sum_{j=1}^N   |K_j|^{-2}\chi_j(x)\chi_j(y)  |K_j| dxdy\nonumber\\
&= \sum_{j=1}^N \int_{K_j}\int_{K_j} U_1(x)U_2(y) \sqrt{\mu}(x)\sqrt{\mu}(y)   |K_j|^{-1}   dxdy. 
\end{align}
On the other hand, an application of the It\^{o} isometry gives
\begin{align}\label{Ewhitehomo-s3}
\E[\langle f, U_1\rangle \langle f, U_2\rangle]&=\E\left[\langle \dot{W}_x, \sqrt{\mu}U_1\rangle \langle \dot{W}_y, \sqrt{\mu}U_2\rangle\right]\nonumber\\
&=\int_{B_1} \mu(x)  U_1(x)U_2(x)dx.
\end{align}
Combining \eqref{Ewhitehomo-s2} and \eqref{Ewhitehomo-s3} yields 
\begin{align}
&\E[\langle f, U_1\rangle \langle f, U_2\rangle]-\E[\langle f^N, U_1\rangle \langle f^N, U_2\rangle] \nonumber\\
&=\int_{B_1} U_1(x)U_2(x) \mu(x) dx-\sum_{j=1}^N \int_{K_j}\int_{K_j} U_1(x)U_2(y) \sqrt{\mu}(x)\sqrt{\mu}(y)   |K_j|^{-1}   dxdy  \nonumber\\
&= \sum_{j=1}^N |K_j|^{-1}   \int_{K_j} \int_{K_j}   U_1(x)U_2(x) \mu(x) dxdy \nonumber\\
&\quad - \sum_{j=1}^N   |K_j|^{-1} \int_{K_j}\int_{K_j} U_1(x)U_2(y) \sqrt{\mu}(x)\sqrt{\mu}(y)  dxdy  \nonumber\\
&= \sum_{j=1}^N |K_j|^{-1}   \int_{K_j} \int_{K_j}   U_1(x)\sqrt{\mu}(x)( U_2(x) \sqrt{\mu}(x)-U_2(y) \sqrt{\mu}(y) ) dxdy  \nonumber\\
&\leq\sum_{j=1}^N |K_j|^{-1}   \int_{K_j} \int_{K_j}  \left| U_1(x)\sqrt{\mu}(x)\right| \left|U_2(x) \sqrt{\mu}(x)-U_2(y) \sqrt{\mu}(y) \right|dxdy \label{eq:estEE1}.
\end{align}
Since $U_2\in C^3(\overline{B_{\hat R}})$ and $\sqrt{\mu}\in C^{0,\eta}(B_1)$, there exists a constant $C$ depending on $U_2$ such that
\[
|U_2(x) \sqrt{\mu}(x)-U_2(y) \sqrt{\mu}(y) | \leq C |x-y|^\eta\leq C |\text{diam}(K_j)|^\eta\quad\forall\, x, y\in K_j.
\]
 Substituting the above inequality into \eqref{eq:estEE1} gives 
\begin{align*}
&\E[\langle f, U_1\rangle \langle f, U_2\rangle]-\E[\langle f^N, U_1\rangle \langle f^N, U_2\rangle]\\
&\leq C\sum_{j=1}^N |K_j|^{-1}   |\text{diam}(K_j)|^\eta \int_{K_j} \int_{K_j}  | U_1(x)\sqrt{\mu}(x)| dxdy\\
&\leq C \Big|\max_{K_j\in\mathcal{T}_N}\text{diam}(K_j)\Big|^\eta\sum_{j=1}^N  \int_{K_j}  | U_1(x)\sqrt{\mu}(x)| dx\\
&\leq C \Big|\max_{K_j\in\mathcal{T}_N}\text{diam}(K_j)\Big|^\eta \Vert  U_1\Vert_{L^2(B_{\hat R})}  \Vert  \sqrt{\mu}\Vert_{L^2(B_1)}.
\end{align*}
Given that $\Vert U_1\Vert_{L^2(B_{\hat R})}$ and $ \Vert \sqrt{\mu}\Vert_{L^2(B_1)}$ are bounded, and observing that as $N\to \infty$,  $|\max_{K_j\in\mathcal{T}_N}\text{diam}(K_j)|^\eta\to 0$, we can deduce that  
\begin{equation}\label{eq:convE}
\E[\langle f^N, U_1\rangle \langle f^N, U_2\rangle]\to \E[\langle f, U_1\rangle \langle f, U_2\rangle]\quad\text{as } N\to\infty. 
\end{equation}

To establish the convergence $M_N\to M$ as $N\to\infty$, it is sufficient to verify the convergence $\Vert F^N_j\Vert_{L^2(\partial B_R\times \partial B_R)}\to \Vert F_j\Vert_{L^2(\partial B_R\times \partial B_R)}$ for $j=1, 2, 3$ as $N\to\infty$. It is evident from the expression of the solution in \eqref{eq:repre_homo} and the Itô isometry that
\begin{align*}
\E[u^N(x)u^N(y)]&= \E\left[\int_{B_1} G(x,z_1)\sqrt{\mu}(z_1)dW_{z_1}^N  \int_{B_1} G(y,z_2)\sqrt{\mu}(z_2)dW_{z_2}^N\right]\\
&=\sum_{j=1}^N   |K_j|^{-1}\int_{K_j}\int_{K_j} G(x,z_1) G(y,z_2) \sqrt{\mu}(z_1) \sqrt{\mu}(z_2)dz_1 dz_2
\end{align*}
and
\begin{align*} 
\E[u(x)u(y)] &=  \E\left[\int_{B_1} G(x,z_1)\sqrt{\mu}(z_1)dW_{z_1}  \int_{B_1} G(y,z_2) \sqrt{\mu}(z_2)dW_{z_2}\right]\\
&=\int_{B_1} G(x,z_1) G(y,z_1)\mu(z_1)dz_1,
\end{align*}
which lead to 
\begin{align*}
&\E[u(x)u(y)]-\E[u^N(x)u^N(y)] \\
&= \sum_{j=1}^N  |K_j|^{-1}\int_{K_j}\int_{K_j} G(x,z_1)  \sqrt{\mu}(z_1) (G(y,z_1)\sqrt{\mu}(z_1)- G(y,z_2)\sqrt{\mu}(z_2)) dz_1 dz_2.
\end{align*}
Since $G(y,\cdot)\in C^2(B_1)$ for $y\in \partial B_R$ and $\sqrt{\mu}\in C^{0,\eta}(B_1)$, there exists a constant $C$ depending on $k$ and $R$ such that 
\[
\left|G(y,z_1)\sqrt{\mu}(z_1)- G(y,z_2)\sqrt{\mu}(z_2)\right| \leq  C|z_1-z_2|^\eta\leq C|\text{diam}(K_j)|^\eta\quad\forall\, z_1,z_2\in K_j. 
\]
Clearly, there exists a uniform upper bound on $\Vert G(x,\cdot) \Vert_{L^2(B_1)}$ for all $x\in \partial B_R$, which depends on $R$. Consequently, a simple calculation yields that
\begin{align*}
\E[u(x)u(y)]-\E[u^N(x)u^N(y)] &\leq \sum_{j=1}^N C |\text{diam}(K_j)|^\eta  |K_j|^{-1}   \nonumber\\
&\qquad\qquad\times\int_{K_j}\int_{K_j}| G(x,z_1)  \sqrt{\mu}(z_1) |dz_1 dz_2\\
&\leq C \Big|\max_{K_j\in\mathcal{T}_N}\text{diam}(K_j)\Big|^\eta \int_{B_1} | G(x,z_1)  \sqrt{\mu}(z_1) |dz_1\\
&\leq C  \Big|\max_{K_j\in\mathcal{T}_N}\text{diam}(K_j)\Big|^\eta \Vert G(x,\cdot) \Vert_{L^2(B_1)}  \Vert \sqrt{\mu}\Vert_{L^2(B_1)}\\
&\leq C  \Big|\max_{K_j\in\mathcal{T}_N}\text{diam}(K_j)\Big|^\eta  \Vert \sqrt{\mu}\Vert_{L^2(B_1)},
\end{align*}
 which leads to 
\begin{eqnarray*}
\Vert F_1 -F^N_1\Vert_{L^2(\partial B_R\times \partial B_R)}\leq 4\pi R^2 C\Vert \sqrt{\mu} \Vert_{L^2(B_1)}   |\max_{K_j\in\mathcal{T}^N}\text{diam}(K_j)|^\eta\to 0\quad \text{as } N\to\infty.  
\end{eqnarray*}

Similarly, utilizing the H\"{o}lder continuity of $\partial_\nu G(y,\cdot) \sqrt{\mu}(\cdot)$ and the boundedness of $\Vert \nabla G(x,\cdot) \Vert_{L^2(B_1)}$, we can demonstrate the convergence of $F_2^N$ and $F_3^N$. As a result, $M^N\to M$. By combining the convergence of $M^N$ with \eqref{eq:estEN} and \eqref{eq:convE}, we arrive at the desired conclusion.
\end{proof}

Next, we employ special solutions to the Helmholtz equation in a homogeneous medium to provide an estimation of the Fourier coefficients $\hat{\mu}(\gamma)$ for $|\gamma|\leq 2k$.

\begin{lemma}\label{lem:Fourierlow_white_homo}
There exists a positive constant $c_4$  depending on $k$, $R$, and $\hat R$ such that the following estimate holds
for all $\gamma\in\mathbb{R}^3$ with $|\gamma|< 2k$:
\[
|\hat{\mu}(\gamma)|\leq c_4 M.
\]
\end{lemma}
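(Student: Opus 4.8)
The plan is to exploit the It\^o-isometry identity \eqref{Ewhitehomo-s3},
\[
\E[\langle f, U_1\rangle\langle f, U_2\rangle]=\int_{B_1}\mu(x)\,U_1(x)U_2(x)\,dx,
\]
by selecting $U_1,U_2$ to be \emph{real} plane waves whose product reproduces the Fourier kernel $e^{-\ii\gamma\cdot x}$. For a fixed $\gamma$ with $|\gamma|<2k$, I would seek $\xi_1,\xi_2\in\mathbb R^3$ with $|\xi_1|=|\xi_2|=k$ and $\xi_1+\xi_2=-\gamma$, and take $U_i(x)=e^{\ii\xi_i\cdot x}$. Each such $U_i$ solves $\Delta u+k^2u=0$ on all of $\mathbb R^3$ and is smooth, hence lies in $C^3(\overline{B_{\hat R}})$, so Lemma \ref{lem:Ewhitehomo} is applicable.

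The construction of the pair $\xi_1,\xi_2$ is the one genuinely geometric step. Writing $\xi_1=-\tfrac{\gamma}{2}+\eta$ and $\xi_2=-\tfrac{\gamma}{2}-\eta$, the sum condition $\xi_1+\xi_2=-\gamma$ is automatic, while $|\xi_1|=|\xi_2|=k$ forces $\eta\perp\gamma$ and $|\eta|^2=k^2-\tfrac{|\gamma|^2}{4}$. Since $|\gamma|<2k$, the right-hand side is strictly positive, so a suitable $\eta$ exists in the plane orthogonal to $\gamma$ (and may be taken arbitrary of length $k$ when $\gamma=0$). This is precisely where the hypothesis $|\gamma|<2k$ is used: for larger $|\gamma|$ real wave vectors no longer exist and one is pushed toward complex vectors whose $L^2$ norms grow exponentially, which is the source of the contrast between the low- and high-frequency regimes.

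With this choice one has $U_1(x)U_2(x)=e^{\ii(\xi_1+\xi_2)\cdot x}=e^{-\ii\gamma\cdot x}$, so \eqref{Ewhitehomo-s3} together with the compact support of $\mu$ in $B_1$ gives
\[
\E[\langle f, U_1\rangle\langle f, U_2\rangle]=\int_{\mathbb R^3}\mu(x)\,e^{-\ii\gamma\cdot x}\,dx=(2\pi)^3\hat\mu(\gamma).
\]
Because each $\xi_i$ is real, $|U_i(x)|\equiv 1$, so $\|U_i\|_{L^2(B_{\hat R})}=|B_{\hat R}|^{1/2}=\bigl(\tfrac{4}{3}\pi\hat R^3\bigr)^{1/2}$ is a constant depending only on $\hat R$.

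It then remains to feed these into the bound \eqref{Ewhitehomo-s1}:
\[
(2\pi)^3|\hat\mu(\gamma)|=\bigl|\E[\langle f, U_1\rangle\langle f, U_2\rangle]\bigr|\le c_2 M\,\|U_1\|_{L^2(B_{\hat R})}\|U_2\|_{L^2(B_{\hat R})}=\tfrac{4}{3}\pi\hat R^3\,c_2 M,
\]
which yields $|\hat\mu(\gamma)|\le c_4 M$ with $c_4=\tfrac{\hat R^3}{6\pi^2}c_2$, depending only on $k,R,\hat R$ through $c_2$ and $\hat R$. As $\gamma$ ranges over all vectors with $|\gamma|<2k$, the claim follows. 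The only real obstacle is the plane-wave construction; the rest is direct substitution into the previously established identity and estimate.
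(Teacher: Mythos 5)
Your proposal is correct and follows essentially the same route as the paper: the same real plane-wave pair $\xi^{(1,2)}=-\tfrac{\gamma}{2}\pm\bigl(k^2-\tfrac{|\gamma|^2}{4}\bigr)^{1/2}d$ with $d\perp\gamma$, the same It\^{o}-isometry identity producing $(2\pi)^3\hat\mu(\gamma)$, and the same application of Lemma \ref{lem:Ewhitehomo}, arriving at the identical constant $c_4=\tfrac{\hat R^3}{6\pi^2}c_2$. Your added remark on why $|\gamma|<2k$ is exactly the threshold for real wave vectors is a correct and useful clarification, but the argument itself coincides with the paper's.
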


\begin{proof}
First we construct two special solutions to the Helmholtz equation $\Delta u +k^2 u =0$.  Given a fixed $\gamma\in \mathbb{R}^3$, we select a unit vector $d\in\mathbb{R}^3$ such that $d\cdot \gamma = 0$, and then define two real vectors
\begin{align*}
\xi^{(1)} &= -\frac{1}{2}\gamma  +\left(k^2-\frac{|\gamma|^2}{4}\right)^{1/2} d\in \mathbb{R}^3,\\
\xi^{(2)} &=-\frac{1}{2}\gamma -\left(k^2-\frac{|\gamma|^2}{4}\right)^{1/2} d\in \mathbb{R}^3.
\end{align*}
It is easy to verify that $\xi^{(1)}+\xi^{(2)} = -\gamma$, $\xi^{(1)}\cdot \xi^{(1)}= \xi^{(2)}\cdot \xi^{(2)} = k^2$. Let 
\[
U_i(x,\xi^{(i)}) = e^{\ii  \xi^{(i)}\cdot x},\quad i=1,2,
\]
which satisfies $\Delta U_i +k^2 U_i=0$.  

It follows from the It\^{o} isometry that 
 \begin{align*}
\E[\langle f, U_1\rangle \langle f, U_2\rangle]  &=  \int_{\mathbb{R}^3} \int_{\mathbb{R}^3} U_1(x) U_2(y) \E [f(x)f(y)]dxdy\\
&=     \int_{\mathbb{R}^3} \mu(x) e^{-\ii\gamma\cdot x}dx=(2\pi)^3\hat{\mu}(\gamma).
\end{align*}
Using Lemma \ref{lem:Ewhitehomo}, we obtain 
\begin{align*}
 |\hat{\mu}(\gamma) |&\leq  c_2 (2\pi)^{-3} M \Vert U_1\Vert_{L^2(B_{\hat R})} \Vert U_2\Vert_{L^2(B_{\hat R})}\\
 &\leq \frac{\hat R^3}{6\pi^2}  c_2 M.
\end{align*}
The proof is completed by taking $c_4 = \frac{\hat R^3}{6\pi^2}  c_2$. 
\end{proof}

We are now in a position to prove Theorem  \ref{thm:white_homo}.

\begin{proof}
It is clear to note that 
\begin{align*}
\Vert \mu\Vert_{L^\infty(B_1)} &\leq \sup_{x\in B_1}\bigg|\int_{\mathbb{R}^3} e^{\ii \gamma\cdot x} \hat{\mu}(\gamma)d\gamma\bigg|\\
&\leq \int_{|\gamma|\leq \rho} |\hat{\mu}(\gamma)|d\gamma+ \int_{|\gamma|>\rho} |\hat{\mu}(\gamma)|d\gamma\\
&:=I_1(\rho) + I_2(\rho).
\end{align*}

By Lemma \ref{lem:Fourierlow_white_homo}, we have 
\begin{eqnarray*}
I_1(\rho)\leq \frac{4\pi \rho^3}{3} c_4 M.
\end{eqnarray*}
Combining the above estimate and \eqref{eqn:fourierhigh} yields 
\begin{eqnarray*}
\Vert \mu\Vert_{L^\infty(B_1)} &\leq&\frac{4\pi \rho^3}{3} c_4 M+c_1\frac{1}{\rho^{s-3}}.
\end{eqnarray*}
For simplicity, we take $\hat R=2R$ and $\epsilon = \frac{\hat R-R}{4} = \frac{R}{4}$. Now for $k>\frac{2}{R-1}$, we have the radius $r = \frac{1}{k}$ in \eqref{est:U1} and \eqref{est:U2}. If $k$ also satisfies $k>\frac{4\sqrt{c_3}}{R}$, then we can choose a constant  $c_5$ depending only on $s$ and $R$ such that  $c_4= c_5 k^5$. This follows the observation that  $c_4 = \frac{(\hat R)^3}{6\pi^2}c_2$ and $c_2=12 R^2 r^{-3} \left(1+ \left(k^2 +\frac{c_3}{\epsilon^2}\right)^{1/2}\right)^2$ in the proof of Lemma \ref{lem:Ewhitehomo}. Taking $k>\max\{\frac{2}{R-1},\frac{4\sqrt{c_3}}{R},(\frac{1}{2})^{\frac{m}{m+5}} (\frac{3c_1}{4\pi c_5 M})^{\frac{1}{m+5}}\} $ and  $\rho = (\frac{3c_1}{4\pi c_4 M})^{\frac{1}{m}}$, one can verify that $\rho<2k$,  and there holds
\begin{equation*}
\Vert \mu\Vert_{L^\infty(B_1)}\leq \left(\frac{4\pi }{3} c_4\right)^{1- \frac{3}{s}}c_1^{\frac{3}{s}} M^{1-\frac{3}{s}}.
\end{equation*}
Taking  $C_1 = (\frac{4\pi }{3} c_4)^{1- \frac{3}{s}}c_1^{\frac{3}{s}}$ yields the desired estimate \eqref{est:homowhite}. 

Recalling the definitions $C_1 = (\frac{4\pi}{3} c_4)^{1- \frac{3}{s}}c_1^{\frac{3}{s}}$ and $c_4 = c_5 k^5$, where $c_5$ and $c_1$ depend only on $s$, $R$ and $s$, respectively, we can deduce that $C_1 = O(k^{5(1-\frac{3}{s})})$ as $k\to\infty$.
\end{proof}

It is worth noting that the estimate in Theorem \ref{thm:white_homo} can be adapted for the two-dimensional problem with a moderate adjustment to the proof, which is given as follows for completeness. 

For the random source $f$ with a strength $\mu$, by the Paley--Wiener--Schwartz theorem, we have 
\[
|\hat{\mu}(\gamma)|\leq\overline{c}_0 (1+|\gamma|)^{-s}. 
\]
Thus, there exists a positive constant $\overline{c}_1$ depending on $s$ such that  
\begin{equation*}
\int_{|\gamma|>\rho} |\hat{\mu}(\gamma)|d\gamma\leq \overline{c}_1 \rho^{2-s}.
\end{equation*}
For the Fourier coefficients of the strength $\mu$ in low frequency modes, there still exists a positive constant $ \overline{c}_4 $ such that
\[
|\hat{\mu}(\gamma)|\leq \overline{c}_4 M,\quad |\gamma|<2k.
\]
Combining the above estimates gives
\begin{align*}
\Vert \mu\Vert_{L^\infty(B_1)} &\leq \sup_{x\in B_1}\bigg|\int_{\mathbb{R}^3} e^{\ii \gamma\cdot x} \hat{\mu}(\gamma)d\gamma\bigg|\\
&\leq \int_{|\gamma|\leq \rho} |\hat{\mu}(\gamma)|d\gamma+ \int_{|\gamma|>\rho} |\hat{\mu}(\gamma)|d\gamma\\
&\leq 4\pi\rho^2 \overline{c}_4 M+\overline{c}_1 \rho^{2-s}.
\end{align*}
Taking $\rho =\left( \frac{\overline{c}_1}{4\pi \overline{c}_4 M}\right)^{\frac{1}{s}}$, we obtain for $\rho\leq 2k$ that 
\begin{equation*}
\Vert \mu\Vert_{L^\infty(B_1)} \leq \overline{C}_1 M^{1-\frac{2}{s}},
\end{equation*}
where 
\[
 \overline{C}_1=\overline{c}_1^{\frac{2}{s}} ( 4\pi \overline{c}_4)^{1-\frac{2}{s}}. 
\]

\subsection{Inhomogeneous media}

We now investigate the stability of the inverse random source problem for the Helmholtz equation in an inhomogeneous medium. In this analysis, we consider the use of complex geometric optics (CGO) solutions as special solutions.

Similarly, we initiate our analysis by deriving an estimate that establishes a connection between the internal distribution of the random source and the measurement of the wave field $u$ on $\partial B_R$.

\begin{lemma}
Consider two solutions $U_i\in C^3(\overline{B_{\hat R}})$, $i=1,2$, to the equation $\Delta u + k^2 (1+q) u = 0$, where $\hat R > R > 1$. Let $f = \sqrt{\mu}(x) \dot{W}$. Then there exists a positive constant $c_2$ depending on $k$, $R$, and $\hat R$ such that the following estimate holds:
\begin{equation}\label{eq:estfU_inhomo}
|\E[\langle f, U_1\rangle \langle f, U_2\rangle]| \leq c_2 M \Vert U_1\Vert_{L^2(B_{\hat R})} \Vert U_2\Vert_{L^2(B_{\hat R})}.
\end{equation}
\end{lemma}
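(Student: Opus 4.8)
The plan is to follow the proof of Lemma~\ref{lem:Ewhitehomo} line by line, exploiting the fact that the potential $q$ is supported in $B_1$ while every trace estimate only involves the exterior region $B_{\hat R}\setminus\overline{B_1}$. As before I would first work with the finite-dimensional approximation $f^N=\sqrt{\mu}\dot W_x^N$ of \eqref{eq:defW} and the corresponding solution $u^N$ of $\Delta u^N+k^2(1+q)u^N=f^N$ satisfying the Sommerfeld radiation condition, and only at the end pass to the limit $N\to\infty$.

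The crucial point is the integration-by-parts step. Since now \emph{both} $u^N$ and $U_i$ carry the same potential $k^2(1+q)$, Green's second identity on $B_R$ gives
\begin{align*}
\int_{B_R} f^N U_i\,dx
&=\int_{B_R}\big(\Delta u^N+k^2(1+q)u^N\big)U_i\,dx\\
&=\int_{\partial B_R}\big(\partial_\nu u^N\,U_i-\partial_\nu U_i\,u^N\big)\,ds
+\int_{B_R} u^N\big(\Delta U_i+k^2(1+q)U_i\big)\,dx\\
&=\int_{\partial B_R}\big(\partial_\nu u^N\,U_i-\partial_\nu U_i\,u^N\big)\,ds,
\end{align*}
where the volume term vanishes because $\Delta U_i+k^2(1+q)U_i=0$. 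Thus the $q$-dependent contributions cancel and I recover the \emph{same} boundary representation of $\langle f^N,U_i\rangle$ as in the homogeneous case. Consequently the expansion leading to \eqref{eq:est_E} and its bound by $M^N$ times products of boundary $L^2$-norms of $U_i$ and $\partial_\nu U_i$ carries over verbatim.

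It remains to reproduce the trace estimates \eqref{est:U1}--\eqref{est:U2}. Here I would use that $q$ vanishes outside $\overline{B_1}$, so $U_i$ solves the \emph{free} equation $\Delta U_i+k^2U_i=0$ throughout $B_{\hat R}\setminus\overline{B_1}$. The mean value identities \eqref{eq:reU}--\eqref{eq:imU} are evaluated at maximizers on $\partial B_R$ using balls $B_{x_j,r}$ of radius $r\le\frac{R-1}{2}$, which stay inside $\{|x|>\frac{R+1}{2}\}\subset B_{\hat R}\setminus\overline{B_1}$; Caccioppoli's inequality is likewise invoked only on this exterior annulus with cutoffs localized away from $\partial B_1$, hence in the region where $q\equiv0$. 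Therefore all of these tools apply unchanged and yield \eqref{est:U1}--\eqref{est:U2} with the same constant $c_2$, hence the finite-dimensional estimate \eqref{eq:estEN}.

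Finally I would pass to the limit. The convergence $\E[\langle f^N,U_1\rangle\langle f^N,U_2\rangle]\to\E[\langle f,U_1\rangle\langle f,U_2\rangle]$ of \eqref{eq:convE} uses only the It\^o isometry, the H\"older continuity of $\sqrt\mu$, and the $C^3$-regularity of $U_i$; since none of these sees the medium, it holds verbatim. The genuinely new difficulty, which I expect to be the main obstacle, is the convergence $M^N\to M$: the solution no longer has the explicit representation \eqref{eq:repre_homo} but only the implicit Lippmann--Schwinger form \eqref{eq:repre_inhomo}. To handle it I would invoke the $L^2(B_R)$-convergence $u^N\to u$ established in Lemma~\ref{lem:unique}, and then upgrade it to convergence of the Cauchy data on $\partial B_R$ by interior elliptic regularity together with trace estimates in the exterior annulus, where the equation is the free Helmholtz equation and $u$ is analytic by Lemma~\ref{lem:boundary}. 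This yields $F_j^N\to F_j$ in $L^2(\partial B_R\times\partial B_R)$, hence $M^N\to M$, and combining the three ingredients establishes \eqref{eq:estfU_inhomo}.
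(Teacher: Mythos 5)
Your proposal is correct, and its first two stages coincide with the paper's own proof: the same approximation $f^N=\sqrt{\mu}\,\dot W^N_x$, the same Green's identity on $B_R$ in which the $q$-terms cancel because $U_i$ carries the same potential as $u^N$, the same observation that $U_i$ solves the free equation $\Delta u+k^2u=0$ in $B_{\hat R}\setminus\overline{B_1}$ so that the mean value identities and Caccioppoli's inequality give \eqref{est:U1}--\eqref{est:U2} unchanged, and the same argument for the convergence of $\E[\langle f^N,U_1\rangle\langle f^N,U_2\rangle]$.

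Where you genuinely diverge from the paper is the key step $M^N\to M$. The paper substitutes the Lippmann--Schwinger representation \eqref{eq:repre_inhomo} into each correlation function, splits $\E[u^N(x)u^N(y)]$ (and its analogues for the Cauchy data) into terms built from $\mathcal{G}[qu^N]$ and $\mathcal{G}[\sqrt{\mu}\dot W^N]$, and proves convergence term by term, using uniform bounds on $\Vert G(x,\cdot)q(\cdot)\Vert_{L^2(B_1)}$ and $\Vert \partial_\nu G(x,\cdot)q(\cdot)\Vert_{L^2(B_1)}$, the mean-square convergence $\E[\Vert u^N-u\Vert^2_{L^2(B_1)}]\to 0$, and the mean value property plus Caccioppoli to pass from interior to boundary norms of $u_0^N-u_0$. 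You instead work directly with $w^N=u^N-u$: since both the source and $q$ vanish outside $B_1$, $w^N$ solves the free Helmholtz equation in the annulus almost surely, so interior elliptic estimates give a pathwise bound of $|w^N(x)|+|\nabla w^N(x)|$ on $\partial B_R$ by a deterministic constant times $\Vert w^N\Vert_{L^2}$ over a slightly larger annulus, and Cauchy--Schwarz on second moments then yields $F_j^N\to F_j$ uniformly, hence in $L^2(\partial B_R\times\partial B_R)$, for all three $j$ at once. This is a legitimate and in fact more economical route: it treats $F_1$, $F_2$, $F_3$ in a unified way and avoids the paper's term-by-term bookkeeping, at the price of invoking interior regularity for the Helmholtz equation rather than only explicit kernel bounds. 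Two small points need care to make it airtight. First, the $L^2(B_R)$ convergence you quote from Lemma \ref{lem:unique} is not by itself sufficient, since $\partial B_R$ is not interior to $B_R$; you must run the same contraction argument on a strictly larger ball $B_{R'}$ with $R<R'<\hat R$ (the proof of Lemma \ref{lem:unique} works verbatim for any radius, and the paper itself implicitly uses $\Vert u_0^N-u_0\Vert_{L^2(B_{\hat R})}$ for the same reason). Second, besides the convergence $\E[|w^N(x)|^2]\to 0$ you need the uniform-in-$N$ bound of $\E[|u^N(x)|^2]$ and $\E[|\partial_\nu u^N(x)|^2]$ on $\partial B_R$ to run Cauchy--Schwarz; this follows from the same interior estimate applied to $u^N$ itself together with the boundedness of $\E[\Vert u^N\Vert^2_{L^2}]$. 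With these adjustments your argument is complete.
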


\begin{proof}
Using a similar line of reasoning as in Lemma \ref{lem:Ewhitehomo}, we denote $ \sqrt{\mu}(x)\dot{W}_x^N$ as $f^N$, and let $u^N$ represent the solution to $\Delta u^N + k^2 (1+q) u^N = f^N$ satisfying the Sommerfeld radiation condition. By employing this approach, we deduce for $U_i$, $i=1,2,$ that 
\[
 \langle f^N, U_i\rangle  = \int_{\partial B_R} \left( \partial_\nu u^N(x) U_i(x) -\partial_\nu U_i(x) u^N(x)\right) ds(x)
\]
and 
\begin{align*}
&\mathbb{E}[\langle f^N, U_1\rangle \langle f^N, U_2\rangle]\\
&\leq  M^N\Vert U_1\Vert_{L^2(\partial B_R)}\Vert U_2\Vert_{L^2(\partial B_R)}+M^N\Vert U_1\Vert_{L^2(\partial B_R)}\Vert \partial_\nu U_2\Vert_{L^2(\partial B_R)}\\
&\quad +M^N\Vert \partial_\nu U_1\Vert_{L^2(\partial B_R)}\Vert  U_2\Vert_{L^2(\partial B_R)}+M^N\Vert \partial_\nu U_1\Vert_{L^2(\partial B_R)}\Vert \partial_\nu U_2\Vert_{L^2(\partial B_R)},
\end{align*}
where 
\[
M^N = \max\left\{ \Vert F^N_1\Vert_{L^2(\partial B_R\times \partial B_R)},\, \Vert F^N_2\Vert_{L^2(\partial B_R\times \partial B_R)},\, \Vert F^N_3\Vert_{L^2(\partial B_R\times \partial B_R)}\right\}
\]
and
\begin{align*}
F^N_1(x,y)&=\E[u^N(x) u^N(y)],\\
F^N_2(x,y) &=\E[u^N(x)\partial_\nu u^N(y)],\\
F^N_3(x,y)&=\E[\partial_\nu u^N(x) \partial_\nu u^N(y)] .
\end{align*}

Given that $U_i$ satisfies $\Delta u + k^2 u = 0$ in $B_{\hat R}\setminus  \overline{B_1}$, we can conclude that the same type of inequalities as \eqref{est:U1} and \eqref{est:U2} apply to $U_i$. Thus there exists a positive constant $c_2$ such that
\[
\E[\langle f^N, U_1\rangle \langle f^N, U_2\rangle]\leq c_2 M^N \Vert U_1\Vert_{L^2(B_{\hat R})} \Vert U_2\Vert_{L^2(B_{\hat R})}.
\]

Next, we establish \eqref{eq:estfU_inhomo} by considering a sequence of approximations ${f^N}$, where $f^N = \sqrt{\mu}(x)\dot{W}_x^N$. The triangulation $\mathcal{T}_N$ is chosen such that $\max_{K_j\in \mathcal{T}_N}\text{diam}(K_j)\to 0$ as $N\to \infty$. By employing the same arguments as in Lemma \ref{lem:Ewhitehomo}, we can show that
\begin{equation*}
\E[\langle f^N, U_1\rangle \langle f^N, U_2\rangle]\to \E[\langle f, U_1\rangle \langle f, U_2\rangle]\quad \text{as }N\to\infty.  
\end{equation*}

The last step is to verify that $M^N\to M$ as $N\to \infty$, which can be accomplished by checking the convergence $\Vert F^N_j\Vert_{L^2(\partial B_R\times \partial B_R)}\to \Vert F_j\Vert_{L^2(\partial B_R\times \partial B_R)}, j=1, 2, 3$ as $N\to\infty$.  Using \eqref{eq:repre_inhomo}, we have 
\begin{align*}
\E[u^N(x)u^N(y)]&=\E\Big[\left(-k^2 \mathcal{G}[qu^N](x) + \mathcal{G}[\sqrt{\mu}\dot{W}^N](x)\right) \nonumber\\
&\qquad\qquad\times\left(-k^2 \mathcal{G}[qu^N](y) + \mathcal{G}[\sqrt{\mu}\dot{W}^N](y)\right)\Big]\\
&=k^4  \E\left[ \mathcal{G}[qu^N](x) \mathcal{G}[qu^N](y)\right]-k^2 \E\left[ \mathcal{G}[qu^N](x)\mathcal{G}[\sqrt{\mu}\dot{W}^N](y)\right]\\
&\quad -k^2 \E\left[\mathcal{G}[\sqrt{\mu}\dot{W}^N](x) \mathcal{G}[qu^N](y)\right]\\
&\quad +\E\left[\int_{B_1} G(x,z_1)\sqrt{\mu}(z_1)dW_{z_1}^N  \int_{B_1} G(y,z_2)\sqrt{\mu}(z_2)dW_{z_2}^N\right]\\
&:= k^4 I_1^N(x,y)+ k^2 I_2^N(x,y)+ k^2 I_2^N(y,x)+ I_3^N(x,y)
\end{align*}
and
\begin{align*} 
\E[u(x)u(y)] &= \E\left[\left(-k^2 \mathcal{G}[qu](x) + \mathcal{G}[\sqrt{\mu}\dot{W}](x)\right)\left(-k^2 \mathcal{G}[qu](y) + \mathcal{G}[\sqrt{\mu}\dot{W}](y)\right)\right]\\
&=k^4  \E\left[ \mathcal{G}[qu](x) \mathcal{G}[qu](y)\right] -k^2 \E\left[ \mathcal{G}[qu](x)\mathcal{G}[\sqrt{\mu}\dot{W}](y)\right]\\
&\quad -k^2 \E\left[\mathcal{G}[\sqrt{\mu}\dot{W}](x) \mathcal{G}[qu](y)\right]\\
&\quad +\E\left[\int_{B_1} G(x,z_1)\sqrt{\mu}(z_1)dW_{z_1}  \int_{B_1} G(y,z_2)\sqrt{\mu}(z_2)dW_{z_2}\right]\\
&:= k^4 I_1(x,y)+ k^2 I_2(x,y)+ k^2 I_2(y,x)+ I_3(x,y).
\end{align*}

First we consider $I_1^N-I_1$. After straightforward calculations, we obtain
\begin{align}
 &\E\left[ \mathcal{G}[qu^N](x) \mathcal{G}[qu^ N](y)\right]-\E\left[ \mathcal{G}[qu](x) \mathcal{G}[qu](y)\right] \nonumber\\
 &=\int_{B_1} \int_{B_1} G(x,z_1)G(y, z_2)  q(z_1)q(z_2) \left(\E[u^N(z_1)u^N(z_2)]-\E[u(z_1)u(z_2)] \right)dz_1 dz_2 \nonumber\\
 &=\int_{B_1} \int_{B_1} G(x,z_1)G(y, z_2) q(z_1) q(z_2)\E[u^N(z_1)(u^N(z_2)-u(z_2))]dz_1 dz_2 \nonumber\\
 &\quad +\int_{B_1} \int_{B_1} G(x,z_1)G(y, z_2)  q(z_1)q(z_2)\E[(u^N(z_1)-u(z_1))u(z_2)] dz_1 dz_2 \nonumber\\
 &= \E\left[\int_{B_1} G(x,z_1) q(z_1)  u^N(z_1) dz_1  \int_{B_1}G(y, z_2) q(z_2)\left(u^N(z_2)-u(z_2)\right)dz_2\right] \nonumber\\
 &\quad+ \E\left[\int_{B_1} G(x,z_1) q(z_1)  (u^N(z_1)-u(z_1)) dz_1  \int_{B_1}G(y, z_2) q(z_2)u(z_2)dz_2\right] \nonumber\\
 &\leq\Vert G(x,\cdot) q(\cdot)  \Vert_{L^2(B_1)}\Vert G(y,\cdot) q(\cdot)  \Vert_{L^2(B_1)}\E[\Vert  u^N\Vert_{L^2(B_1)} \Vert  u^N-u\Vert_{L^2(B_1)} ] \nonumber\\
 &\quad +\Vert G(x,\cdot) q(\cdot)  \Vert_{L^2(B_1)}\Vert G(y,\cdot) q(\cdot)  \Vert_{L^2(B_1)}\E[\Vert  u\Vert_{L^2(B_1)} \Vert  u^N-u\Vert_{L^2(B_1)}].\label{eq:estI1}
\end{align}
By observing that $\Vert G(x,\cdot) q(\cdot) \Vert_{L^2(B_1)}$ is uniformly bounded above by a constant $C$ that depends on $R$ for all $x\in \partial B_R$, we can conclude from the proof of Lemma \ref{lem:unique} that $\E[\Vert u^N-u\Vert^2_{L^2(B_1)}]\to 0$ as $N\to\infty$. Hence, for $N$ exceeding a positive constant $N_0$, we have $\E[\Vert u^N\Vert_{L^2(B_1)}] \leq 2\E[\Vert u\Vert_{L^2(B_1)}]$. Consequently, utilizing \eqref{eq:estI1} yields 
\begin{align*}
 &\E\left[ \mathcal{G}[qu^N](x) \mathcal{G}[qu^ N](y)\right]-\E\left[ \mathcal{G}[qu](x) \mathcal{G}[qu](y)\right]\\
 &\leq 3 C^2 \E[\Vert u\Vert^2_{L^2(B_1)}]^{\frac{1}{2}}  \E[\Vert u_N-u\Vert^2_{L^2(B_1)} ]^{\frac{1}{2}}\to 0\quad\text{as } N\to\infty, 
\end{align*}
which shows
\begin{equation*}
\Vert I_1^N-I_1\Vert_{L^2(\partial B_R\times \partial B_R)} \to 0 \quad\text{as } N\to\infty. 
\end{equation*}

We proceed to prove the convergence of $I_2^N$. Let $u_0(x)=\mathcal{G}[\sqrt{\mu}\dot{W}](x)$ and $u_0^N(x)=\mathcal{G}[\sqrt{\mu}\dot{W}^N](x)$. It follows from the proof of Lemma \ref{lem:Ewhitehomo} that $\E[u^N_0(x) u^N_0(y)]\to \E[u_0(x) u_0(y)]$ as $N\to \infty$. It is easy to verify that 
\begin{align*}
& \E\left[ \mathcal{G}[qu](x)\mathcal{G}[\sqrt{\mu}\dot{W}](y)\right]- \E\left[ \mathcal{G}[qu^N](x)\mathcal{G}[\sqrt{\mu}\dot{W}^N](y)\right]\\
&= \E\left[\int_{B_1} G(x,z_1) q(z_1)  u(z_1) dz_1  \int_{B_1}G(y, z_2)\sqrt{\mu}(z_2)(dW_{z_2}-dW^N_{z_2})\right]\\
 &\quad +\E\left[\int_{B_1} G(x,z_1) q(z_1)  \left(u^N(z_1)-u(z_1)\right) dz_1  \int_{B_1}G(y, z_2)\sqrt{\mu}(z_2)dW^N_{z_2}\right]\\
 &= \E\left[\int_{B_1} G(x,z_1) q(z_1)  u(z_1) dz_1\left( u_0^N(y)-u_0(y)\right)\right]\\
 &\quad +\E\left[\int_{B_1} G(x,z_1) q(z_1)  \left(u^N(z_1)-u(z_1)\right) dz_1 u_0^N(y)\right]\\
 &\leq \Vert G(x,\cdot ) q(\cdot ) \Vert_{L^2(B_1)} \left( \E[ \Vert u\Vert_{L^2(B_1)}  | u_0^N(y)-u_0(y)|]+\E[ \Vert u^N-u \Vert_{L^2(B_1)} |u_0^N(y)|]\right),
\end{align*}
where we have used the Cauchy–Schwarz inequality for the last line.
As  there exists a uniform upper bound of $\Vert G(x,\cdot) q(\cdot) \Vert_{L^2(B_1)}$  
 for all $x\in \partial B_R$, we can find a constant $C$ such that
\begin{align*}
&\Vert I_2^N-I_2\Vert_{L^2(\partial B_R\times \partial B_R)}  \\
&\leq \int_{\partial B_R}\Vert G(x,\cdot ) q(\cdot) \Vert_{L^2(B_1)} ds(x) \int_{\partial B_R} \E\left[ \Vert u\Vert_{L^2(B_1)}   | u_0^N(y)-u_0(y)|\right] ds(y)\\
 &\quad + \int_{\partial B_R}\Vert G(x,\cdot ) q(\cdot) \Vert_{L^2(B_1)} ds(x) \int_{\partial B_R}\E\left[ \Vert u^N-u \Vert_{L^2(B_1)} |u_0^N(y)|\right]ds(y)\\
 &\leq C\E\left[ \Vert u\Vert_{L^2(B_1)}  \int_{\partial B_R}  | u_0^N(y)-u_0(y)| ds(y)\right]  \nonumber\\
&\quad+ C\E\left[ \Vert u^N-u \Vert_{L^2(B_1)}  \int_{\partial B_R}|u_0^N(y)|ds(y)\right]\\
 &\leq C \E\left[ \Vert u\Vert_{L^2(B_1)}^2\right]^{\frac{1}{2}} \E\left[ \left(\int_{\partial B_R}  | u_0^N(y)-u_0(y)| ds(y)\right)^2\right]^{\frac{1}{2}}  \\
 &\quad + C \E[ \Vert u^N-u \Vert_{L^2(B_1)}^2]^{\frac{1}{2}} \E\left[\left( \int_{\partial B_R}|u_0^N(y)|ds(y)\right)^2\right]^{\frac{1}{2}}  \\
 &:= I_{2,1}+ I_{2,2},
\end{align*}
where we have used the Cauchy--Schwarz inequality for the random variables.

Note that 
\begin{align*}
\E\left[ \left(\int_{\partial B_R}  | u_0^N(y)-u_0(y)| ds(y)\right)^2\right]&\leq \E\left[ \left( 4\pi R^2 \int_{\partial B_R}  | u_0^N(y)-u_0(y)|^2 ds(y)\right)\right]\\
&\leq 4\pi R^2   \E\left[ \Vert u_0^N-u_0\Vert_{L^2(\partial B_R)}^2 \right].
\end{align*}
As  $u_0$ satisfies the Helmholtz equation $\Delta u+k^2 u =f $, it is proved in Lemma \ref{lem:boundary} that $u_0$ is analytic in $B_{\hat R}\setminus \overline{B_1}$. It can be observed that  $u_0^N$ is analytic in $B_{\hat R}\setminus \overline{B_1}$, which implies that $u_0^N-u_0 \in C^{\infty}(B_{\hat R}\setminus \overline{B_1})$ and satisfies $\Delta u +k^2 u = 0$ in $B_{\hat R}\setminus \overline{B_1}$. We can apply the mean value property for solutions to the Helmholtz equation and deduce that there exists a constant $C$ depending on  $R$, $\hat R$, and $k$ such that 
\begin{equation*}
\Vert u_0^N-u_0\Vert_{L^2(\partial B_R)}\leq C \Vert u_0^N-u_0\Vert_{L^2( B_{\hat R})}.
\end{equation*}
Since $\E [\Vert u_0^N-u_0\Vert_{L^2( B_{\hat R})}]\to 0$ as $N\to \infty$, we have $I_{2,1}\to 0$. 
Using the Cauchy--Schwartz inequality and the mean value property, we can also claim that the term $\E\left[\left( \int_{\partial B_R}|u_0^N(y)|ds(y)\right)^2\right]  $ has a uniform upper bound for $N\geq N_0$.
Combining with $\E [\Vert u^N-u\Vert_{L^2( B_{\hat R})}]\to 0$ leads to $I_{2,2}\to 0$  as $N\to \infty$. The convergence of $I^N_3$ follows from the proof of Lemma \ref{lem:Ewhitehomo}. Hence we can conclude that 
\[
\Vert F^N_1\Vert_{L^2(\partial B_R\times \partial B_R)}\to \Vert F_1\Vert_{L^2(\partial B_R\times \partial B_R)}\quad\text{as }N\to\infty. 
\]

Next is to show the convergence of $F_2^N$. It is clear to note that 
\begin{align*}
&\E[u^N(x)\partial_\nu u^N(y)] - \E[u(x)\partial_\nu u(y)] \\
&= k^4  \left(\E\left[ \mathcal{G}[qu^N](x)\partial_\nu\left(\mathcal{G}[qu^N]\right)(y)-\mathcal{G}[qu](x)\partial_\nu\left(\mathcal{G}[qu]\right)(y)\right]\right)\\
&\quad -k^2 \left(\E\left[ \mathcal{G}[qu^N](x)\partial_\nu u_0^N (y)- \mathcal{G}[qu](x)\partial_\nu u_0(y)\right]\right)\\
&\quad -k^2 \left(\E\left[u_0^N(x) \partial_\nu\left(\mathcal{G}[qu^N]\right)(y)-u_0(x) \partial_\nu\left(\mathcal{G}[qu]\right)(y)\right]\right)\\
&\quad +\left(\E[u_0^N(x) \partial_\nu u_0^N(y)] -\E[u_0(x) \partial_\nu u_0(y)]\right).
\end{align*}
According to the formula for the Green's function, $\Vert \partial_\nu G(x,\cdot) q(\cdot) \Vert_{L^2(B_1)}$ is bounded above by a constant $C$ that depends on $k$ and $R$ for all $x\in \partial B_R$. By Lemma \ref{lem:boundary}, we may also show that the first order derivatives of $ u_0^N-u_0$ are analytic in $B_{\hat R}\setminus\overline{B_1}$. It follows from  the mean value property of the solution to the Helmholtz equation and  the Caccioppoli's inequality that there exists a constant, denoted by $C$, which depends on $k$, $R$, and $\hat R$, such that
\begin{eqnarray*}
\Vert \partial_\nu u_0^N-\partial_\nu u_0\Vert_{L^2(\partial B_R)}\leq C \Vert u_0^N-u_0\Vert_{L^2( B_{\hat R})}.
\end{eqnarray*}
By adopting a similar argument to the proof of the convergence of $F_1^N$, we can show that $F_2^N\to F_2$ as $N\to \infty$.

We are now left with considering $F_3^N$. It follows from the expressions of the solutions $u$ and $u^N$ that
\begin{align*}
&\E[\partial_\nu u^N(x)\partial_\nu u^N(y)] - \E[\partial_\nu u(x)\partial_\nu u(y)] \\
&=k^4  (\E[\partial_\nu  \mathcal{G}[qu^N](x)\partial_\nu(\mathcal{G}[qu^N])(y)-\partial_\nu \mathcal{G}[qu](x)\partial_\nu(\mathcal{G}[qu])(y)])\\
&\quad -k^2 (\E[ \partial_\nu \mathcal{G}[qu^N](x)\partial_\nu u_0^N (y)- \partial_\nu \mathcal{G}[qu](x)\partial_\nu u_0(y)])\\
&\quad -k^2 (\E[\partial_\nu u_0^N(x) \partial_\nu(\mathcal{G}[qu^N])(y)-\partial_\nu u_0(x) \partial_\nu(\mathcal{G}[qu])(y)])\\
&\quad +\E[\partial_\nu u_0^N(x) \partial_\nu(u_0^N(y))] -\E[\partial_\nu u_0(x) \partial_\nu u_0(y)].
\end{align*}
Then the convergence of $F_3^N$ can be established by utilizing the boundedness of $\Vert \partial_\nu G(x,\cdot) q(\cdot) \Vert_{L^2(B_1)}$ and the convergence of $u_0^N$ and $u^N$, which completes the proof. 
\end{proof}

Next, we establish the stability estimate for the reconstruction of the strength $\mu$ by estimating the Fourier coefficients $\hat{\mu}$. To achieve this, we utilize special solutions to the equation
\begin{equation*}
\Delta u+ k^2 (1+q) u = 0.
\end{equation*}
The solutions, known as complex geometric optics (CGO) solutions, possess the structure described in the following lemma.

\begin{lemma}\label{lem:CGO}
 For all $\xi\in\mathbb{C}^3$ satisfying $\xi\cdot\xi=k^2$ and $|\Im (\xi)|\geq c_5$, where $c_5$ depends on  $s$, $k$, and $\hat R$, there exists a function $v(\cdot,  \xi)\in C^3(B_{\hat R})$ such that 
\[
U(x,\xi) =e^{\ii \xi\cdot x}(1+v(x,\xi)),\ x\in B_{\hat R}
\]
satisfies $\Delta U+k^2 (1+q)U = 0$ in $B_{\hat R}$, and the following estimate holds:
\[
\Vert v(\cdot,\xi)\Vert_{L^2(B_{\hat R})}\leq \frac{c_6}{|\Im(\xi)|},
\]
where $c_6$ is a positive constant depending on $k$ and $\|q\|_{L^\infty(B_1)}$. 
\end{lemma}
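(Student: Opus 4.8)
The plan is to construct the complex geometric optics solution $U(x,\xi)=e^{\ii\xi\cdot x}(1+v(x,\xi))$ by deriving the governing equation for the remainder $v$ and solving it via a fixed-point argument involving a suitable inverse of the conjugated Laplacian. Substituting the ansatz into $\Delta U + k^2(1+q)U = 0$ and using $\xi\cdot\xi = k^2$, the exponential factor cancels the leading terms, and one finds that $v$ must satisfy
\begin{equation*}
\Delta v + 2\ii\xi\cdot\nabla v + k^2 q\, v = -k^2 q.
\end{equation*}
The key analytic tool is the existence of an operator $G_\xi$ (the Faddeev-type inverse of $\Delta + 2\ii\xi\cdot\nabla$) that inverts the principal part and satisfies the decay estimate
\[
\Vert G_\xi g\Vert_{L^2(B_{\hat R})}\leq \frac{C}{|\Im(\xi)|}\Vert g\Vert_{L^2(B_{\hat R})},
\]
a standard estimate going back to Sylvester--Uhlmann. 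First I would recall or cite this resolvent estimate, which holds because on the Fourier side the symbol $-|\zeta|^2 + 2\ii\xi\cdot(\ii\zeta) = -|\zeta|^2 - 2\xi\cdot\zeta$ stays bounded away from zero at scale $|\Im\xi|$ when $\xi$ has large imaginary part.

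Next I would recast the equation for $v$ as the integral (Lippmann--Schwinger-type) equation
\begin{equation*}
v = -k^2 G_\xi(q\,v) - k^2 G_\xi(q),
\end{equation*}
and set up a contraction mapping. Defining $T v := -k^2 G_\xi(q v) - k^2 G_\xi(q)$ on $L^2(B_{\hat R})$, the resolvent bound gives
\[
\Vert T v_1 - T v_2\Vert_{L^2(B_{\hat R})}\leq \frac{k^2 C\Vert q\Vert_{L^\infty(B_1)}}{|\Im(\xi)|}\Vert v_1 - v_2\Vert_{L^2(B_{\hat R})}.
\]
Thus, by choosing $c_5$ large enough (depending on $k$ and $\Vert q\Vert_{L^\infty}$) so that $|\Im(\xi)|\geq c_5$ forces the Lipschitz constant below $\tfrac12$, the map $T$ is a contraction. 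The Banach fixed-point theorem then yields a unique $v\in L^2(B_{\hat R})$, and the bound
\[
\Vert v\Vert_{L^2(B_{\hat R})}\leq \frac{\Vert T(0)\Vert_{L^2(B_{\hat R})}}{1-\tfrac12}\leq \frac{2k^2 C\Vert q\Vert_{L^\infty(B_1)}}{|\Im(\xi)|} =: \frac{c_6}{|\Im(\xi)|}
\]
follows directly, giving the claimed estimate with $c_6$ depending only on $k$ and $\Vert q\Vert_{L^\infty(B_1)}$.

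It remains to upgrade the $L^2$ solution to the asserted $C^3$ regularity. Since $q\in C^{1,\alpha}(\overline{B_1})$, I would apply elliptic regularity (bootstrapping through Schauder or $W^{2,p}$ estimates on the equation $\Delta v = -2\ii\xi\cdot\nabla v - k^2 q v - k^2 q$) to promote $v$ from $L^2$ to $C^{2,\alpha}$ and hence to the regularity needed for $U\in C^3(B_{\hat R})$; the smoothness of the exponential factor and the $C^{1,\alpha}$ regularity of $q$ are what make this bootstrap terminate at the required level. The main obstacle is establishing the weighted resolvent estimate for $G_\xi$ with the sharp $|\Im(\xi)|^{-1}$ decay and ensuring the constant $C$ there is uniform over all admissible $\xi$ on the complex sphere $\xi\cdot\xi = k^2$; once that estimate is in hand, the contraction argument and the regularity bootstrap are routine. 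I would therefore cite the standard CGO construction (e.g.\ following Sylvester--Uhlmann) for the resolvent bound and present only the fixed-point and regularity steps in detail.
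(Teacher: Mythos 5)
Your proposal is correct, and it arrives at the same integral equation $v=-k^2G_\xi(qv)-k^2G_\xi(q)$ as the paper, but the two proofs distribute the work differently. The paper does not re-derive existence: it cites \cite[Lemma 2.9]{hahner1998acoustic} for solvability, for the bound $\Vert v\Vert_{L^2(B_{\hat R})}\leq c_6/|\Im(\xi)|$, and for $v\in C^2$ when $q\in C^{0,\alpha}$; its only new work is the upgrade from $C^2$ to $C^3$, done by differentiating the integral equation and using two properties of H\"{a}hner's operator $G_\xi$, namely $G_\xi:C^{0,\alpha}(\overline{B_{\hat R}})\to C^2(\overline{B_{\hat R}})$ and $\partial_j(G_\xi f)=G_\xi(\partial_j f)$ for $f\in C_0^1(B_{\hat R})$ (this is where the compact support of $q$, hence of $qv$, enters), which gives $\partial_j v\in C^2(\overline{B_{\hat R}})$ and so $v\in C^3$ up to the boundary. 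Your route instead re-proves the existence step by contraction in $L^2$ (essentially H\"{a}hner's/Sylvester--Uhlmann's own argument, and your fixed-point bound $\Vert v\Vert\leq 2\Vert T(0)\Vert$ is correct), and replaces the $G_\xi$-based differentiation by an interior Schauder bootstrap on $\Delta v+2\ii\xi\cdot\nabla v=-k^2q(1+v)$, which indeed terminates exactly at $C^3$ because $q\in C^{1,\alpha}$. Two caveats. First, your heuristic for the resolvent estimate is inaccurate as stated: the conjugated symbol $-|\zeta|^2-2\xi\cdot\zeta$ is \emph{not} bounded away from zero at scale $|\Im(\xi)|$ --- it vanishes on a codimension-two sphere --- and the $|\Im(\xi)|^{-1}$ bound comes from the Sylvester--Uhlmann splitting of frequency space near and away from that characteristic set; since you cite this estimate rather than prove it, this is a misleading justification rather than a gap. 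Second, interior Schauder estimates yield $C^3(B_{\hat R})$ only on the open ball, which matches the statement as written, but the lemma is later applied to solutions $U_i\in C^3(\overline{B_{\hat R}})$ in the inhomogeneous counterpart of Lemma \ref{lem:Ewhitehomo}; to obtain regularity up to $\partial B_{\hat R}$ you should run your construction on a slightly larger ball and restrict (harmless, since $q$ is supported in $B_1$), whereas the paper's $G_\xi$-based argument delivers $\overline{B_{\hat R}}$-regularity directly. What your route buys is independence from the finer mapping and commutation properties of the Faddeev operator; what the paper's route buys is a shorter argument and boundary regularity, given the cited background.
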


\begin{proof}
It is proved in \cite[Lemma 2.9]{hahner1998acoustic} that when $q\in C^{0,\alpha}(\overline{B_1}), 0<\alpha<1$, there exists a function $v(\cdot,  \xi)\in C^2(B_{\hat R})$ such that $U(x,\xi) =e^{\ii \xi\cdot x}(1+v(x,\xi))$ satisfies $\Delta U+k^2 (1+q)U = 0$ in $B_{\hat R}$. Here we sketch the proof to show that when $q\in C^{1,\alpha}(\overline{B_1})$, the constructed function $v(\cdot,  \xi)$ has a higher regularity and belongs to $C^3(B_{\hat R})$.

As in the proof of \cite[Lemma 2.9]{hahner1998acoustic}, we take $v\in C^2(B_{\hat R})$ as a solution 
to the integral equation
\begin{equation}\label{eqn:defv}
v(\cdot,\xi) = -k^2 G_\xi (qv(\cdot,\xi))-k^2 G_\xi(q),
\end{equation}
where the operator $G_\xi$ is defined in \cite[Theorem 2.8]{hahner1998acoustic} and has the following properties:
\begin{enumerate}
\item If $f\in C^{0,\alpha} (\overline{B_{\hat R}}) $, then $G_\xi f\in C^2 (\overline{B_{\hat R}})$;  
\item  If $f\in C_0^1 (B_{\hat R}) $, then $\partial_j(G_\xi f) = G_\xi(\partial_j f)$.
\end{enumerate}
It can be verified that $U(x,\xi) =e^{\ii \xi\cdot x}(1+v(x,\xi))$ satisfies the Helmholtz equation $\Delta U+k^2 (1+q)U = 0$ in $B_{\hat R}$. Recall that $q\in C^{1,\alpha}(\overline{B_1})$ and is compactly supported in $B_1$. Clearly, we have $q\in C^{1,\alpha}(\overline{B_{\hat R}})$ and both $q$ and $qv(\cdot,\xi)$ belong to $C_0^1 (B_{\hat R}) $. Taking the partial derivative on both sides of \eqref{eqn:defv} and using the property of $G_\xi$ yields
\begin{align*}
\partial_j v(\cdot,\xi) &= -k^2 \partial_j (G_\xi (qv(\cdot,\xi)))-k^2 \partial_j( G_\xi(q))\\
&=-k^2 G_\xi ( \partial_j (qv(\cdot,\xi)))-k^2  G_\xi( \partial_j (q)).
\end{align*}
Since $\partial_j (qv(\cdot,\xi)),\partial_j (q) \in  C^{0,\alpha} (\overline{B_{\hat R}})$, it follows from the property of $G_\xi$ that $\partial_j v(\cdot,\xi)\in C^2(\overline{B_{\hat R}})$. Hence we arrive at $v(\cdot,\xi)\in C^3(\overline{B_{\hat R}})$.
\end{proof}
\begin{remark}
By following the proof in \cite[Lemma 2.9]{hahner1998acoustic}, we can establish that 
the constants $c_5$ and $c_6$ in Lemma \ref{lem:CGO} satisfy $c_5 = O(k^2)$ and $c_6=O(k^2)$ as $k\to\infty$. 
\end{remark}

Using the special solutions, we can estimate the Fourier coefficients $\hat\mu(\gamma)$ for $|\gamma|\leq \rho$.

\begin{lemma}\label{lem:Fourierlow}
Assume $\rho\geq 2$ and define $t_0 = \sqrt{c_5^2 + \rho^2}$, where $c_5$ is the constant given in Lemma \ref{lem:CGO} for $\hat R= 2R$. Then there exists a positive constant $c_7$ depending on $k$ and $R$ such that the following estimate holds for all $\gamma\in\mathbb{R}^3$ with $|\gamma|< \rho$ and for all $t>t_0$:
\[
|\hat{\mu}(\gamma)|\leq c_7\left(M e^{4Rt}+\frac{\Vert \mu\Vert_{L^\infty(B_1)} }{t}\right).
\]
\end{lemma}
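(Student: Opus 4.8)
The plan is to imitate the homogeneous argument of Lemma \ref{lem:Fourierlow_white_homo}, replacing the plane waves by the CGO solutions furnished by Lemma \ref{lem:CGO}. The key observation is that the It\^o isometry identity \eqref{Ewhitehomo-s3} depends only on $f$ and holds for \emph{any} pair of test functions; hence for two solutions $U_1,U_2$ of $\Delta u+k^2(1+q)u=0$ we again have $\E[\langle f,U_1\rangle\langle f,U_2\rangle]=\int_{B_1}\mu(x)U_1(x)U_2(x)\,dx$. If the two CGO phases are arranged to sum to $-\gamma$, then $U_1U_2=e^{-\ii\gamma\cdot x}(1+v_1)(1+v_2)$, and isolating the constant amplitude recovers $(2\pi)^3\hat\mu(\gamma)$ together with a remainder governed by $v_1,v_2$. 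Bounding the expectation term by the measurement $M$ via \eqref{eq:estfU_inhomo} and the remainder by $\Vert\mu\Vert_{L^\infty(B_1)}/t$ via Lemma \ref{lem:CGO} will yield the claim.

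Concretely, I would first fix $\gamma$ with $|\gamma|<\rho$, choose orthonormal vectors $d_1,d_2\perp\gamma$, and set
\begin{align*}
\xi^{(1)}&=-\tfrac12\gamma+\sqrt{t^2+k^2-\tfrac{|\gamma|^2}{4}}\,d_1+\ii t\,d_2,\\
\xi^{(2)}&=-\tfrac12\gamma-\sqrt{t^2+k^2-\tfrac{|\gamma|^2}{4}}\,d_1-\ii t\,d_2.
\end{align*}
A direct check using the mutual orthogonality of $\gamma,d_1,d_2$ gives $\xi^{(1)}+\xi^{(2)}=-\gamma$, $\xi^{(j)}\cdot\xi^{(j)}=k^2$, and $|\Im\xi^{(j)}|=t$. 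The choice $t>t_0=\sqrt{c_5^2+\rho^2}$ guarantees both $t\geq c_5$, so that Lemma \ref{lem:CGO} applies with $\hat R=2R$, and $t>\rho>|\gamma|$, so that $t^2+k^2-\tfrac{|\gamma|^2}{4}>0$ and the real part is well-defined. Writing $U_j(x)=e^{\ii\xi^{(j)}\cdot x}(1+v_j)$ with $v_j=v(\cdot,\xi^{(j)})$, the It\^o isometry then produces the identity
\begin{equation*}
(2\pi)^3\hat\mu(\gamma)=\E[\langle f,U_1\rangle\langle f,U_2\rangle]-\int_{B_1}\mu(x)e^{-\ii\gamma\cdot x}\bigl(v_1+v_2+v_1v_2\bigr)\,dx.
\end{equation*}

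The two terms are estimated separately. For the first, since $|e^{\ii\xi^{(j)}\cdot x}|\leq e^{2Rt}$ on $B_{\hat R}=B_{2R}$ and $\Vert 1+v_j\Vert_{L^2(B_{\hat R})}\leq |B_{\hat R}|^{1/2}+c_6/t\leq C$ for $t>t_0\geq 2$, one gets $\Vert U_j\Vert_{L^2(B_{\hat R})}\leq C e^{2Rt}$, so \eqref{eq:estfU_inhomo} yields $|\E[\langle f,U_1\rangle\langle f,U_2\rangle]|\leq CMe^{4Rt}$. For the remainder, pulling out $\Vert\mu\Vert_{L^\infty(B_1)}$ and applying the Cauchy--Schwarz inequality together with $\Vert v_j\Vert_{L^2(B_1)}\leq\Vert v_j\Vert_{L^2(B_{\hat R})}\leq c_6/t$ gives $\int_{B_1}(|v_1|+|v_2|)\,dx\leq C/t$ and $\int_{B_1}|v_1v_2|\,dx\leq (c_6/t)^2\leq C/t$, where $t\geq 1$ is used to absorb $1/t^2$ into $1/t$. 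Hence the remainder is at most $C\Vert\mu\Vert_{L^\infty(B_1)}/t$. Dividing by $(2\pi)^3$ and taking $c_7$ to be the larger of the two constants produces the stated bound.

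The main obstacle is the construction and control of the CGO solutions: one must verify that the remainder generated by the amplitudes $v_j$ decays like $1/t$ \emph{uniformly} in $\gamma$, which rests entirely on the estimate $\Vert v(\cdot,\xi)\Vert_{L^2(B_{\hat R})}\leq c_6/|\Im\xi|$ from Lemma \ref{lem:CGO}. The structural tension of the bound---the signal term grows like $e^{4Rt}$ while the remainder decays only like $1/t$---is intrinsic and is exactly what later forces a logarithmic rather than H\"older rate once $t$ is optimized in the proof of Theorem \ref{thm:white_inhomo}. Carrying the $k$-dependence of $c_5,c_6$ through into $c_7$ is routine bookkeeping but should be tracked.
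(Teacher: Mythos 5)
Your proposal is correct and follows essentially the same route as the paper's proof: the same CGO phase vectors (up to relabeling $d_1,d_2$) summing to $-\gamma$ with $\xi^{(j)}\cdot\xi^{(j)}=k^2$ and $|\Im\xi^{(j)}|=t$, the same It\^{o}-isometry identity splitting $(2\pi)^3\hat\mu(\gamma)$ from the remainder $v_1+v_2+v_1v_2$, the expectation bounded by $CMe^{4Rt}$ via \eqref{eq:estfU_inhomo} with $\Vert U_j\Vert_{L^2(B_{2R})}\leq Ce^{2Rt}$, and the remainder bounded by $C\Vert\mu\Vert_{L^\infty(B_1)}/t$ via Cauchy--Schwarz and $\Vert v_j\Vert_{L^2(B_{2R})}\leq c_6/t$. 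No gaps; the only cosmetic difference is that you make explicit the checks ($t_0\geq\rho\geq 2$, $t>|\gamma|$) that the paper leaves implicit.
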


\begin{proof}
Let $\hat R=2R$. For $t>t_0$ and a fixed $\gamma\in \mathbb{R}^3$, we choose two unit vectors $d_1$ and $d_2$ such that $d_1\cdot d_2 = d_1\cdot \gamma= d_2\cdot\gamma = 0$. We define two complex vectors as follows: 
\begin{align}
\xi_t^{(1)} &= -\frac{1}{2}\gamma + \ii t d_1 +\left(t^2+k^2-\frac{|\gamma|^2}{4}\right)^{1/2} d_2\in \mathbb{C}^3, \label{eq:xit1}\\
\xi_t^{(2)} &= -\frac{1}{2}\gamma - \ii t d_1 -\left(t^2+k^2-\frac{|\gamma|^2}{4}\right)^{1/2} d_2\in \mathbb{C}^3.\label{eq:xit2}
\end{align}
It is easy to verify that $\xi_t^{(1)}+\xi_t^{(2)} = -\gamma$ and $\xi_t^{(1)}\cdot \xi_t^{(1)}= \xi_t^{(2)}\cdot \xi_t^{(2)} = k^2$. By Lemma \ref{lem:CGO}, there exist two geometric optics solutions 
\begin{eqnarray*}
U_i(x,\xi_t^{(i)}) &=& e^{\ii  \xi_t^{(i)}\cdot x}\left(1+v_i(x,\xi_t^{(i)})\right),\quad i=1,2,
\end{eqnarray*}
satisfying the Helmholtz equation $\Delta u +k^2 (1+q)u = 0$ for $x\in B_{2R}$, and the solutions have the property that $\Vert v_i\Vert_{L^2(B_{2 R})}\leq \frac{c_6}{|\Im \xi_t^{(i)}|}$. For simplicity, we denote $U_i(x,\xi_t^{(i)})$ by $U_i(x), i=1,2$. 

A simple calculation yields 
\begin{equation*}
U_1(x)U_2(x)  = e^{-\ii\gamma\cdot x}(1+p(x,t)),
\end{equation*}
where 
\[
p(x,t) = v_1(x,\xi_t^{(1)}) + v_2(x,\xi_t^{(2)}) + v_1(x,\xi_t^{(1)})v_2(x,\xi_t^{(2)}).
\]
It can be verified that the function $p(x,t)$ satisfies
\begin{align*}
\Vert p(x,t)\Vert_{L^1(B_1)} &= \int_{B_1} \left| v_1(x,\xi_t^{(1)}) + v_2(x,\xi_t^{(2)}) + v_1(x,\xi_t^{(1)})v_2(x,\xi_t^{(2)})\right|dx \nonumber \\
&\leq  \sqrt{\frac{4\pi}{3}}\Vert v_1\Vert_{L^2(B_1)} +\sqrt{\frac{4\pi}{3}} \Vert v_2\Vert_{L^2(B_1)}  +\Vert v_1\Vert_{L^2(B_1)}  \Vert v_2\Vert_{L^2(B_1)}   \nonumber \\
&\leq \sqrt{\frac{4\pi}{3}} \frac{c_6}{|\Im \xi_t^{(1)}|} + \sqrt{\frac{4\pi}{3}}\frac{c_6}{|\Im \xi_t^{(2)}|} + \frac{c_6^2}{|\Im \xi_t^{(1)}| |\Im \xi_t^{(2)}|}\nonumber \\
&\leq\left(2\sqrt{\frac{4\pi}{3}}c_6 +c_6^2\right)\frac{1}{t}.
\end{align*}
Using the  It\^{o} isometry, we obtain 
\begin{align*}
\E[\langle f, U_1\rangle \langle f, U_2\rangle]  &= \E\left[\int_{\mathbb{R}^3} \int_{\mathbb{R}^3} \sqrt{\mu}(x)U_1(x) \sqrt{\mu}(y) U_2(y)  \dot{W} (x) \dot {W}(y)dxdy\right] \nonumber\\
&=\int_{\mathbb{R}^3}  \mu(x)U_1(x) U_2(x)dx \nonumber\\\
&= \int_{\mathbb{R}^3}  \mu(x) e^{-\ii\gamma\cdot x}(1+p(x,t)))dx  \nonumber\\\
&=(2\pi)^3\hat{\mu}(\gamma) +  \int_{\mathbb{R}^3}  \mu(x) e^{-\ii\gamma\cdot x}p(x,t))dx.
\end{align*}
We have from straightforward calculations that 
\begin{align*}
\Vert U_i\Vert_{L^2(B_{\hat R})}&= \Vert e^{\ii \xi_t^{(i)}\cdot x}(1+v_i(x,\xi_t^{(i)}))\Vert_{L^2(B_{\hat R})}\\
&\leq\Vert e^{\ii\xi_t^{(i)}\cdot x}\Vert_{L^\infty(B_{\hat R})} \Vert 1+v_i\Vert_{L^2(B_{\hat R})}\\
&\leq  \left(4\sqrt{\frac{2\pi R^3}{3}}+\frac{c_6}{t} \right)e^{2Rt}.
\end{align*}
Combining the above estimates shows that there exists a positive constant $c_7$ depending on $k$ and $R$ such that
\begin{align*}
|\hat{\mu}(\gamma)| &\leq (2\pi)^{-3}\left( | \E[\langle f, U_1\rangle \langle f, U_2\rangle]| + \left|\int_{B_1}  \mu(x) e^{-\ii\gamma\cdot x}p(x,t))dx\right|\right)\\
&\leq (2\pi)^{-3} c_3 M \Vert U_1\Vert_{L^2(B_{\hat R})} \Vert U_2\Vert_{L^2(B_{ \hat R})} + (2\pi)^{-3} \left(2\sqrt{\frac{4\pi}{3}}c_6 +c_6^2\right)\Vert \mu\Vert_{L^\infty(B_1)} \frac{1}{t}\\
&\leq c_7\left(M e^{4Rt}+\frac{\Vert \mu\Vert_{L^\infty(B_1)} }{t}\right),
\end{align*}
which completes the proof. 
\end{proof}

Now we are ready to prove Theorem \ref{thm:white_inhomo}.

\begin{proof}
It is clear to note that 
\begin{align*}
\Vert \mu\Vert_{L^\infty(B_1)} &\leq \sup_{x\in B_1}\left|\int_{\mathbb{R}^3} e^{\ii \gamma\cdot x} \hat{\mu}(\gamma)d\gamma\right|\\
&\leq \int_{|\gamma|\leq \rho} |\hat{\mu}(\gamma)|d\gamma+ \int_{|\gamma|>\rho} |\hat{\mu}(\gamma)|d\gamma\\
&:= I_1(\rho) + I_2(\rho).
\end{align*}
For $t>t_0$, it follows from Lemma \ref{lem:Fourierlow} that 
\begin{equation*}
I_1(\rho)\leq \frac{4\pi \rho^3}{3}  c_7\left( M e^{4Rt}+ \frac{\Vert \mu\Vert_{L^\infty(B_1)} }{t}\right).
\end{equation*}
Denoting $\frac{4\pi}{3}c_7$ still by  $c_7$, we have from  \eqref{eqn:fourierhigh} that 
\[
\Vert \mu\Vert_{L^\infty(B_1)} \leq \rho^3 c_7\left( M e^{4Rt}+ \frac{\Vert \mu\Vert_{L^\infty(B_1)} }{t}\right)+c_1\frac{1}{\rho^{s-3}}.
\]
Taking $\rho = (\frac{t}{2C_7})^{\frac{1}{3}} $ leads to 
\[
\frac{1}{2}\Vert \mu\Vert_{L^\infty(B_1)} \leq \frac{M}{2} e^{4Rt}t+c_1 (2c_7)^{\frac{s-3}{3}} t^{-\frac{s-3}{3}}.
\]
Let $t = (1-\tau)\frac{\ln(3+M^{-1})}{4R}$,  $\tau\in(0,1)$, where $M$ is assumed to be sufficiently small such that  $t> t_0$. 

Combining the above estimates yields
\begin{align*}
\Vert \mu\Vert_{L^\infty(B_1)} &\leq \frac{M}{2}(3+M^{-1})^{1-\tau} \frac{\ln(3+M^{-1})}{4R}+c_1 (2c_7)^{\frac{s-3}{3}}\left(\frac{\ln(3+M^{-1})}{4R}\right)^{-\frac{s-3}{3}}\\
&\leq  \frac{(3M+1)^{1-\tau}}{8R}M^\tau\ln(3+M^{-1})+c_1 (8Rc_7)^{\frac{s-3}{3}}\left(\ln(3+M^{-1})\right)^{-\frac{s-3}{3}},
\end{align*}
which, under the assumption that $M$ is sufficient small, implies 
\begin{equation*}
\Vert \mu\Vert_{L^\infty(B_1)} \leq   2c_1 (8Rc_7)^{\frac{s-3}{3}}(\ln(3+M^{-1}))^{-\frac{s-3}{3}}.
\end{equation*}
Taking $C_2 = 2c_1 (8Rc_7)^{\frac{s-3}{3}}$, we arrive at the estimate \eqref{est:inhomowhite}. 

Next, we analyze the dependence of the constant $C_2$ on the wavenumber $k$. Recall that in the proof of Lemma \ref{lem:Fourierlow}, we have $c_7 =  2 (2\pi)^{-3}  \max\left\{c_3,   2\sqrt{\frac{4\pi}{3}}c_6 +c_6^2\right\}$, where $c_3 = O(k^5)$ and $c_6=O(k^2)$ as $k\to\infty$. Since $c_1$ depends only on $s$, we conclude that  $C_2 = O(k^{5(\frac{s}{3}-1)})$ as $k\to\infty$.
\end{proof}

We mention that the H\"{o}lder-type stability in Theorem \ref{thm:white_homo} will reduce to the logarithmic-type estimate in Theorem \ref{thm:white_inhomo} when the assumption $k>k_0$ is removed in Theorem \ref{thm:white_homo}. We provide a brief outline of the proof for the estimate in the case of a homogeneous medium below.

Consider the complex-valued special solutions  $U_i(x) = e^{\ii  \xi_t^{(i)}\cdot x}$, $i=1,2$, to the Helmholtz equation $\Delta u+k^2 u =0$, where $\xi_t^{(i)}$ are defined in \eqref{eq:xit1}--\eqref{eq:xit2}.
 For all $\gamma\in\mathbb{R}^3$, it follows the It\^{o} isometry that
 \[
\hat{\mu}(\gamma) =  (2\pi)^{-3} | \E[\langle f, U_1\rangle \langle f, U_2\rangle]|.
\]
By Lemma \ref{lem:Ewhitehomo}, we may deduce for $|\gamma|\leq\rho$ that 
\begin{align*}
|\hat{\mu}(\gamma)| &\leq (2\pi)^{-3} c_2 M \Vert U_1\Vert_{L^2(B_{\hat R})} \Vert U_2\Vert_{L^2(B_{\hat R})} \\
&\leq (2\pi)^{-3} c_2 M e^{4R\rho}.
\end{align*}
Then it can be verified that 
\begin{align*}
\Vert \mu\Vert_{L^\infty(B_1)} &\leq \sup_{x\in B_1}\left|\int_{\mathbb{R}^3} e^{\ii \gamma\cdot x} \hat{\mu}(\gamma)d\gamma\right|\\
&\leq \int_{|\gamma|\leq \rho} |\hat{\mu}(\gamma)|d\gamma+ \int_{|\gamma|>\rho} |\hat{\mu}(\gamma)|d\gamma\\
&\leq \rho^3 c_7M e^{4R\rho}+c_1\rho^{3-s}.
\end{align*}
Letting  $\rho= (1-\tau)\frac{\ln(3+M^{-1})}{4R}$,  $\tau\in(0,1)$, we obtain 
\begin{eqnarray*}
\Vert \mu\Vert_{L^\infty(B_1)} \leq &&\left((1-\tau)\frac{\ln(3+M^{-1})}{4R}\right)^3\\
&&\times \left(c_7M^\tau (3M+1)^{1-\tau}+c_1\left((1-\tau)\frac{\ln(3+M^{-1})}{4R}\right)^{-s}\right),
\end{eqnarray*}
which, under the assumption that $M$ is sufficiently small, implies that 
\[
\Vert \mu\Vert_{L^\infty(B_1)}\leq C\left(\ln(3+M^{-1})\right)^{3-s}.
\]

\section{Conclusion}\label{sec:4}

In this paper, we have explored the stability of the inverse source problems for the stochastic Helmholtz equation driven by white noise. We have established the well-posedness of the direct source problem and examined the stability of the solutions to the inverse source problems. By utilizing the special solutions to the associated Helmholtz equations, we have obtained both H\"{o}lder-type and logarithmic-type stability results for the cases of homogeneous and inhomogeneous media, respectively. Importantly, we have provided explicit expressions for the dependence of the stability constants on the wave number, thereby offering valuable insights into the behavior of the estimates. These results enhance our understanding of the inverse source problems and have practical implications for applications in medical imaging, specifically for acoustic and electromagnetic waves.

There are several potential avenues for future research in the field of inverse random source problems. In this study, we focused on modeling the random source as a white noise. However, there is room for exploring more general cases where the random source is modeled as a generalized random field. In such scenarios, the applicability of the It\^{o} isometry may be limited, and additional terms may arise due to source correlation and interaction with the media. Investigating these aspects and developing appropriate mathematical frameworks are promising directions for future investigation. Furthermore, the methodology and techniques employed in this study can be extended to handle different types of wave equations. Exploring the applicability and effectiveness of our approach in the context of other wave equations is another interesting avenue for future exploration. 

We anticipate reporting our findings and advancements in these areas in upcoming articles, where we will delve deeper into the challenges posed by generalized random fields and the extension of our methodology to various wave equations.

\bibliographystyle{siamplain}

\end{document}